
\documentclass[a4paper,oneside,final]{article}%
\usepackage{amsmath}
\usepackage{graphicx}
\usepackage{amsfonts}
\usepackage{amssymb}%
\setcounter{MaxMatrixCols}{30}
\providecommand{\U}[1]{\protect\rule{.1in}{.1in}}
\providecommand{\U}[1]{\protect\rule{.1in}{.1in}}
\newtheorem{theorem}{Theorem}[section]

\newtheorem{definition}[theorem]{Definition}

\newtheorem{lemma}[theorem]{Lemma}

\newtheorem{proposition}[theorem]{Proposition}
\newtheorem{remark}[theorem]{Remark}

\newenvironment{proof}[1][Proof]{\textbf{#1.} }{\ \rule{0.5em}{0.5em}}
\begin{document}

\title{Stochastic Differential Equations Driven by Fractional Brownian Motion and
Standard Brownian Motion}
\author{Jo\~{a}o Guerra\thanks{Supported by FCT- (FEDER/POCI 2010) and by the MCyT
Grant No. BFM2003-04294}\\CEMAPRE, Rua do Quelhas 6, \\1200-781 Lisboa, Portugal. \\E-mail: jguerra@iseg.utl.pt
\and David Nualart \thanks{Supported by the NSF grant DMS0604207}\\Department of Mathematics, University of Kansas, \\405 Snow Hall, Lawrence, Kansas, 66045-2142\\E-mail: nualart@math.ku.edu}
\maketitle

\begin{abstract}
We prove an existence and uniqueness theorem for solutions of
multidimensional, time dependent, stochastic differential equations driven
simultaneously by a multidimensional fractional Brownian motion with Hurst
parameter $H>1/2$ and a multidimensional standard Brownian motion. The proof
relies on some a priori estimates, which are obtained using the methods of
fractional integration, and the classical It\^{o} stochastic calculus. The
existence result is based on the Yamada-Watanabe theorem.

\textit{Keywords:} Stochastic differential equations, fractional Brownian motion.

\textit{MSC2000:} 60H05, 60H10.

\end{abstract}

\section{Introduction}

The fractional Brownian motion (fBm) with Hurst parameter $H\in(0,1)$ is a
zero mean Gaussian process $B^{H}=\{B_{t}^{H},t\geq0\}$ with covariance
function%
\begin{equation}
R_{H}(s,t)=\frac{1}{2}\left(  t^{2H}+s^{2H}-\left\vert t-s\right\vert
^{2H}\right)  . \label{Covfbm}%
\end{equation}
This process was introduced by Kolmogorov in \cite{Kolmogorov} and later
studied by Mandelbrot and Van Ness in \cite{Mandelbrot}. Its self-similar and
long-range dependence (if $H>\frac{1}{2}$) properties make this process a
useful driving noise in models arising in physics, telecommunication networks,
finance and other fields. For a review of some applications of fBm we refer to
\cite{Decreusefond2}.

From (\ref{Covfbm}) we deduce that $\mathbb{E}(\left|  B_{t}^{H}-B_{s}%
^{H}\right|  ^{2})=|t-s|^{2H}$ and, as a consequence, the trajectories of
$B^{H}$ are almost surely locally $\alpha$-H\"{o}lder continuous for all
$\alpha\in(0,H)$. Since $B^{H}$ is not a semimartingale if $H\neq1/2$ (see
\cite{Rogers}),\ we cannot use the classical It\^{o} theory to construct a
stochastic calculus with respect to the fBm. Over the last years some new
techniques have been developed in order to define stochastic integrals with
respect to fBm. Essentially two different types of integrals can be defined:

\begin{itemize}
\item[i)] The Skorokhod integral (or divergence integral) with respect to fBm
is defined as the adjoint of the derivative operator in the framework of the
Malliavin calculus. This approach was introduced by Decreusefond and
\"{U}st\"{u}nel \cite{Decreusefond}, and later developed by Carmona and Coutin
\cite{Carmona}, Duncan, Hu and Pasik-Duncan \cite{Duncan}, Al\`{o}s, Mazet and
Nualart \cite{Alos}, Hu and \O ksendal \cite{Hu}, Cheridito and Nualart
\cite{CN}, among others. This stochastic integral satisfies the zero mean
property and it \ can be expressed as the limit of Riemann sums defined using
Wick products.

\item[ii)] The pathwise Riemann-Stieltjes integral $\int_{0}^{T}v_{s}%
dB_{s}^{H}$ exists if $\left\{  v_{t},t\in\left[  0,T\right]  \right\}  _{\ }$
is a stochastic process with H\"{o}lder continuous paths of order $\alpha
>1-H$, as a consequence of the results of Young \cite{Young}. Z\"{a}hle
expressed this integral in terms of fractional derivative operators, using the
fractional integration by parts formula (see \cite{Zahle}). We also refer to
\cite{RV} for a pathwise approach to the stochastic calculus based on the
regularization of the noise.
\end{itemize}

The aim of this paper is to study the $d$-dimensional stochastic differential
equation
\begin{equation}
X_{t}=X_{0}+\int_{0}^{t}b(s,X_{s})\mathrm{d}s+\int_{0}^{t}\sigma_{W}%
(s,X_{s})\mathrm{d}W_{s}+\int_{0}^{t}\sigma_{H}(s,X_{s})\mathrm{d}B_{s}^{H},
\label{sde1}%
\end{equation}
where $W$ is an $r$-dimensional\ standard Brownian motion and $B^{H}$ is an
$m$-dimensional fractional Brownian motion with $\ H\in\left(  \frac{1}%
{2},1\right)  $. We assume that the processes $W$ and $B^{H}$ are independent.
In the above stochastic differential equation, the integral $\int_{0}%
^{t}\sigma_{W}(s,X_{s})\mathrm{d}W_{s}$ should be interpreted as an It\^{o}
stochastic integral and the integral $\int_{0}^{t}\sigma_{H}(s,X_{s}%
)\mathrm{d}B_{s}$ as a pathwise Riemann-Stieltjes integral in the sense of
Z\"{a}hle \cite{Zahle}. Our main result is a general theorem about the
existence and uniqueness of solutions for the stochastic differential equation
(\ref{sde1}) under suitable conditions on the coefficients.

Equations driven only by a fBm with Hurst parameter $H\in(\frac{1}{2},1)$ can
be solved by a pathwise approach using the $p$-variation norm (see
\cite{Lyons2}), the fractional calculus introduced by Z\"{a}hle (see
\cite{Zahle} and \cite{Nualart}), or H\"{o}lder norms \cite{Ruzmaikina}. Also
using the tools of rough path analysis introduced by Lyons in \cite{Lyons},
Coutin and Qian proved in \cite{Coutin} the existence of strong solutions for
stochastic differential equations driven by fBm with $H>\frac{1}{4}$ and
studied a Wong-Zakai approximation limit for these stochastic differential equations.

Kubilius has studied stochastic differential equations driven by both fBm and
standard Brownian motion (see \cite{KubiliusW}), in the one dimensional case,
with $\sigma_{W}$, $\sigma_{H}$ independent of the time and with no drift term
($b\equiv0$). In this setting, the author proves an existence and uniqueness
result provided that $\sigma_{W}$ is a Lipschitz function and $\sigma_{H}\in
C^{1+\delta}$, with $\delta>q\left(  1-H\right)  $\thinspace, $q>2$. With
these assumptions, the solution belongs to the space of continuous functions
with $q$-bounded variation. Kubilius defines the stochastic integral with
respect to fBm as an extended Riemann-Stieltjes pathwise integral and he uses
$p$-variation estimates.

Our approach is completely different from Kubilius \cite{KubiliusW} in the
sense that we combine the pathwise approach with the It\^{o} stochastic
calculus in order to handle both types of integrals. Then, the uniqueness of a
solution follows from estimates for both It\^{o} and Riemann-Stieltjes
integrals. However, the existence of a strong solution cannot be obtained by
the classical fixed point argument because the estimates of the H\"{o}lder
norm of an integral with respect to $B^{H}$ produce some higher order terms.
For this reason, we first prove the existence of weak solutions and later
deduce the existence of strong solutions using the Yamada-Watanabe theorem.

The paper is organized as follows. In Section 2 we state the problem and list
our assumptions on the coefficients of Eq. (\ref{sde1}). Section 3 provides
some estimates for fractional and It\^{o} integrals. In section 4, the
pathwise uniqueness property of solutions of Eq. (\ref{sde1}) is proved. In
section 5, we introduce the Euler sequence that approximates the solution of
the stochastic differential equation and prove that it is a tight sequence.
Then, the Skorokhod representation theorem is applied in order to prove the
existence of a weak solution for the stochastic differential equation.
Finally, we prove the existence of a unique strong solution by using the
Yamada-Watanabe theorem.

\setcounter{equation}{0}

\section{Preliminaries}

Fix a time interval $[0,T]$ and a complete probability space $\left(
\Omega,\mathcal{F},P\right)  $. Suppose that $B^{H}=\{B_{t}^{H},t\in
\lbrack0,T]\}$ is an $m$-dimensional fractional Brownian motion with Hurst
parameter $H\in\left(  \frac{1}{2},1\right)  $, and $W=\{W_{t},t\in
\lbrack0,T]\}$ an $r$-dimensional standard Brownian motion, independent of
$B^{H}$. Let $X_{0}$ be a $d$-dimensional random variable independent of
$(B^{H},W)$. For each $t\in\lbrack0,T]$ we denote by $\mathcal{F}_{t}$ the
$\sigma$-field generated by the random variables $\{X_{0},B_{s}^{H},$
$W_{s},s\in\lbrack0,t]\}$ and the $P$-null sets.

In addition to the natural filtration $\{\mathcal{F}_{t},t\in\lbrack0,T]\}$ we
will consider bigger filtrations $\{\mathcal{G}_{t},t\in\lbrack0,T]\}$ such that:

\begin{enumerate}
\item $\left\{  \mathcal{G}_{t}\right\}  $ is right-continuous and
$\mathcal{G}_{0}$ contains the $P$-null sets.

\item $X_{0}$ and $B^{H}$ are $\mathcal{G}_{0}$-measurable, and $W$ is a
$\mathcal{G}_{t}$-Brownian motion.
\end{enumerate}

Notice that $\widehat{\mathcal{F}}_{t}\subset\mathcal{G}_{t}$, where
$\widehat{\mathcal{F}}_{t}$ is the $\sigma$-field generated by the random
variables $\{X_{0},B^{H},$ $W_{s},s\in\lbrack0,t]\}$ and the $P$-null sets.

Consider the stochastic differential equation (\ref{sde1}), where $X_{0}$ is a
$d$-dimensional random variable independent of $(B^{H},W)$ and the
coefficients are measurable functions $\ b^{i},\sigma_{W}^{i,k},\sigma
_{H}^{i,j}:\left[  0,T\right]  \times\mathbb{R}^{d}\rightarrow\mathbb{R}$,
$1\leq i\leq d$, $1\leq k\leq r$, $1\leq j\leq m$. We will make use of the
following assumptions on the coefficients.

\begin{itemize}
\item[(H$b$)] The function $b(t,x)$ is continuous. Moreover, it is Lipschitz
continuous in the variable $x$ and has linear growth in the same variable,
uniformly in $t$, that is, there exist constants $L_{1}$ and $L_{2}$ such that%
\begin{align*}
\left\vert b(t,x)-b(t,y)\right\vert  &  \leq L_{1}\left\vert x-y\right\vert
,\\
\left\vert b(t,x)\right\vert  &  \leq L_{2}\left(  1+\left\vert x\right\vert
\right)  ,
\end{align*}
for all $x,y\in\mathbb{R}^{d}\ $\ and $t\in\left[  0,T\right]  $.

\item[(H$\sigma_{W}$)] The function $\sigma_{W}(t,x)$ is continuous. Moreover,
it is Lipschitz continuous in $x$ and has linear growth in the same variable,
uniformly in $t$, that is, there exist constants $L_{3}$ and $L_{4}$ such
that
\begin{align*}
\left\vert \sigma_{W}(t,x)-\sigma_{W}(t,y)\right\vert  &  \leq L_{3}\left\vert
x-y\right\vert ,\\
\left\vert \sigma_{W}(t,x)\right\vert  &  \leq L_{4}\left(  1+\left\vert
x\right\vert \right)  ,
\end{align*}
for all $x,y\in\mathbb{R}^{d}\ $\ and $t\in\left[  0,T\right]  $.

\item[(H$\sigma_{H})$] The function $\sigma_{H}(t,x)$ is continuous and
continuously differentiable in the variable $x$. Moreover,\ there exist
constants $L_{5}$, $L_{6}$ and $L_{7}$ such that%
\begin{align*}
\left\vert \partial_{x_{i}}\sigma_{H}(t,x)\right\vert  &  \leq L_{5},\\
\left\vert \partial_{x_{i}}\sigma_{H}(t,x)-\partial_{x_{i}}\sigma
_{H}(t,y)\right\vert  &  \leq L_{6}\left\vert x-y\right\vert ^{\delta},\\
\left\vert \sigma_{H}(t,x)-\sigma_{H}(s,x)\right\vert +\left\vert
\partial_{x_{i}}\sigma_{H}(t,x)-\partial_{x_{i}}\sigma_{H}(s,x)\right\vert  &
\leq L_{7}\left\vert t-s\right\vert ^{\beta},
\end{align*}
for all $x,y\in\mathbb{R}^{d}\ $\ and $t\in\left[  0,T\right]  $, and for some
constants $0<\delta,\beta\leq1$.
\end{itemize}

Note that assumption $\ $(H$\sigma_{H}$) implies the linear growth property,
i. e., there exists a constant $L$ such that
\begin{equation}
\left|  \sigma_{H}(t,x)\right|  \leq L\left(  1+\left|  x\right|  \right)
\label{HsigmaH growth}%
\end{equation}
for all $x,y\in\mathbb{R}^{d}\ $\ and $t\in\left[  0,T\right]  $.

Let us now introduce some function spaces that will be used in the analysis of
solutions\ of the stochastic differential equation (\ref{sde1}). Let
$0<\alpha<\frac{1}{2}$. For any measurable function $f:\left[  0,T\right]
\rightarrow\mathbb{R}^{d}$ we introduce the following notation
\begin{equation}
\left\|  f(t)\right\|  _{\alpha}:=\left|  f(t)\right|  +\int_{0}^{t}%
\frac{\left|  f(t)-f(s)\right|  }{\left(  t-s\right)  ^{\alpha+1}}\mathrm{d}s.
\label{alpha}%
\end{equation}
Denote by $W_{0}^{\alpha,\infty}$ the space of measurable functions $f:\left[
0,T\right]  \rightarrow\mathbb{R}^{d}$ such that%

\begin{equation}
\left\|  f\right\|  _{\alpha,\infty}:=\underset{t\in\left[  0,T\right]  }%
{\sup}\left\|  f(t)\right\|  _{\alpha}<\infty. \label{fnorm}%
\end{equation}
For $\mu\in(0,1]$ let $C^{\mu}$ be space of $\mu$-H\"{o}lder continuous
functions $f:\left[  0,T\right]  \rightarrow\mathbb{R}^{d}$, equipped with the norm%

\begin{equation}
\left\Vert f\right\Vert _{\mu}:=\left\Vert f\right\Vert _{\infty}%
+\underset{0\leq s<t\leq T}{\sup}\frac{\left\vert f(t)-f(s)\right\vert
}{\left(  t-s\right)  ^{\mu}}<\infty, \label{Hnorm}%
\end{equation}
where $\left\Vert f\right\Vert _{\infty}:=\underset{t\in\left[  0,T\right]
}{\sup}\left\vert f(t)\right\vert .$ Given any $\varepsilon$ such that
$0<\varepsilon<\alpha$, we have the following inclusions:%

\begin{equation}
C^{\alpha+\varepsilon}\subset W_{0}^{\alpha,\infty}\subset C^{\alpha
-\varepsilon}. \label{Eq inclusion Holder W}%
\end{equation}
In particular, both the fractional Brownian motion $B^{H}$, with $H>\frac
{1}{2},$ and the standard Brownian motion $W,$ have their trajectories in
$W_{0}^{\alpha,\infty}$. We denote by $\mathbb{E}^{W}$ the conditional
expectation given $\widehat{\mathcal{F}}_{0}$, that is, given $X_{0}$ and
$B^{H}$.

We now define the space of processes where we will search for solutions of
(\ref{sde1}).

\begin{definition}
\label{Def SIGMA SOL}Let $\mathbb{W}_{\mathcal{G}}$ be the space of
$d$-dimensional $\mathcal{G}_{t}$-adapted stochastic processes $X=\left\{
X_{t},t\in\left[  0,T\right]  \right\}  $ such that \ almost surely the
trajectories of $X$ belong to $W_{0}^{\alpha,\infty}$ and $\int_{0}%
^{T}\mathbb{E}^{W}\left[  \left\|  X_{s}\right\|  _{\alpha}^{2}\right]
\mathrm{d}s<\infty.$
\end{definition}

A strong solution of the stochastic differential equation (\ref{sde1}) is a
stochastic process $X$ in the space $\mathbb{W}_{\mathcal{F}}$, which
satisfies (\ref{sde1}) a.s. The main result proved in this paper is the
following theorem on the uniqueness and existence of \ strong solutions for
(\ref{sde1}).

\begin{theorem}
\label{main}Assume that the coefficients $b,$ $\sigma_{W}$ and $\sigma_{H}$
satisfy the assumptions (H$b$), (H$\sigma_{W}$) and (H$\sigma_{H}$). If
$\alpha$ satisfies $1-H<\alpha<\min\left\{  \frac{1}{2},\beta,\frac{\delta}%
{2}\right\}  $, then there exists a unique strong solution\ $X$ of Equation
(\ref{sde1}).
\end{theorem}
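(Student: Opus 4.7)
The plan is to combine pathwise uniqueness with weak existence and then invoke the Yamada--Watanabe theorem, exactly as signposted in the introduction. The constraint $1-H<\alpha<\min\{1/2,\beta,\delta/2\}$ will be used to make four competing demands compatible: $\alpha>1-H$ so that the $(1-\alpha)$-fractional derivative of $B^H$ is a.s.\ finite (making the Zähle integral meaningful), $\alpha<1/2$ so that $W$ has trajectories in $W_0^{\alpha,\infty}$, $\alpha<\beta$ to absorb the time-Hölder constant of $\sigma_H$, and $2\alpha<\delta$ to absorb the Hölder constant of $\partial_x\sigma_H$.

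For pathwise uniqueness, I would fix two solutions $X,Y\in\mathbb{W}_{\mathcal F}$ of (\ref{sde1}) on the same stochastic basis and estimate $Z_t:=X_t-Y_t$ in the $\|\cdot\|_\alpha$ norm of (\ref{alpha}). The drift difference is linear in $Z$ by (H$b$). For the Itô term, after conditioning by $\widehat{\mathcal F}_0$ and applying the Itô isometry with (H$\sigma_W$), one gets a contribution bounded by a constant times $\int_0^t\mathbb E^W[\|Z_s\|_\alpha^2]\,\mathrm ds$; the integral piece in the definition of $\|\cdot\|_\alpha$ is handled by Fubini and a fractional Hardy-type inequality, presumably proved in Section 3. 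For the fBm-driven term $\int_0^t(\sigma_H(s,X_s)-\sigma_H(s,Y_s))\,\mathrm dB_s^H$ I would use the Zähle/fractional-derivative bound of Section 3 to dominate its $W_0^{\alpha,\infty}$-norm by a pathwise random constant (finite a.s.\ because $\alpha>1-H$) times the $\|\cdot\|_\alpha$-norm of the integrand, and then use the mean-value representation $\int_0^1\nabla_x\sigma_H(s,Y_s+\theta Z_s)\,\mathrm d\theta\cdot Z_s$ together with (H$\sigma_H$) to dominate the latter linearly by $\|Z\|_\alpha$; it is precisely here that $\alpha<\beta$ and $2\alpha<\delta$ enter. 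Assembling the three bounds and running a conditional Gronwall argument on a short subinterval, then iterating, gives $Z\equiv 0$.

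For weak existence, the plan is to follow the Euler scheme of Section 5. The same a priori estimates furnish uniform bounds on $\mathbb E[\|X^n\|_{\alpha,\infty}^2]$ and a Kolmogorov-type modulus, so the laws of $(X^n,W,B^H)$ are tight on $C([0,T];\mathbb R^{d+r+m})$. Applying the Skorokhod representation yields a.s.\ convergent copies on a new probability space, and one passes to the limit term by term: the drift by dominated convergence, the Itô integral by a standard martingale/UCP argument, and the Young integral by the continuity of the Zähle integral on $W_0^{\alpha,\infty}$. The limit is a weak solution in $\mathbb{W}_{\mathcal G}$ for the filtration $\mathcal G$ generated by the Skorokhod copies, which satisfies the two conditions on $\mathcal G$ listed in Section~2. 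Combining this weak existence with pathwise uniqueness, the Yamada--Watanabe theorem upgrades the weak solution to a unique strong solution in $\mathbb{W}_{\mathcal F}$ in the sense of Definition~\ref{Def SIGMA SOL}.

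The hard part will be the pathwise uniqueness step, and specifically the coupling of the pathwise fBm estimate (naturally pointwise in $\omega$) with the Itô estimate (which requires $L^2$ and conditioning on $B^H$). The conditional expectation $\mathbb E^W$ is the natural device because $B^H$ is $\widehat{\mathcal F}_0$-measurable, but one has to be careful that the random constants produced by the fractional-derivative bound on $B^H$ are $\widehat{\mathcal F}_0$-measurable (so that they commute with $\mathbb E^W$) and that the Gronwall iteration closes even though these constants are only a.s.\ finite rather than uniformly bounded in $\omega$. Once this bookkeeping is in place, the exponent constraints in the theorem are exactly what is needed to make all the Hölder/fractional estimates fit together.
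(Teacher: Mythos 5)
Your overall architecture (pathwise uniqueness, weak existence via an Euler scheme and Skorokhod representation, then Yamada--Watanabe) is exactly the paper's, but there is a genuine gap in the uniqueness step. You claim that the mean-value representation together with (H$\sigma_H$) lets you dominate the $\|\cdot\|_\alpha$-norm of $\sigma_H(s,X_s)-\sigma_H(s,Y_s)$ \emph{linearly} by $\|X_s-Y_s\|_\alpha$. It does not: because the $\|\cdot\|_\alpha$-norm contains the integral term $\int_0^s |g(s)-g(r)|(s-r)^{-\alpha-1}\,\mathrm{d}r$, the H\"older condition on $\partial_x\sigma_H$ forces an extra multiplicative factor of the form $1+\Delta X(s)+\Delta Y(s)$ with $\Delta f(s)=\int_0^s |f(s)-f(r)|^{\delta}(s-r)^{-\alpha-1}\,\mathrm{d}r$ (this is the paper's estimate (\ref{GHfh})). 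These quantities depend on the unknown solutions, are not a priori bounded, and — crucially for your plan of commuting constants with $\mathbb{E}^W$ — are \emph{not} $\widehat{\mathcal F}_0$-measurable, since $X$ and $Y$ depend on $W$. This is precisely the ``higher order terms'' obstruction the introduction warns about, and it is why neither a fixed-point argument nor a naive conditional Gronwall closes.

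The paper's remedy, which your sketch is missing, is a localization: one first shows that any weak solution has $\eta$-H\"older trajectories for every $\eta<1/2$ (from the regularity of the three integral processes), sets $\Omega_N=\{\|X\|_\eta\le N,\ \|Y\|_\eta\le N\}$, and bounds $\Delta X,\Delta Y$ by a deterministic constant $C_N$ on $\Omega_N$ (using $\alpha<\eta\delta$, which is where $\alpha<\delta/2$ enters). Since $\mathbf{1}_{\Omega_N}$ is not adapted, it cannot be pushed inside the It\^o integral; the paper splits that term over $\Omega_N$ and $\Omega\setminus\Omega_N$ and shows the complement's contribution is eventually absorbed (it tends to $0$ as $N\to\infty$), after which a Gronwall-type lemma for the singular kernel $(t-s)^{-1/2-\alpha}+s^{-\alpha}$ gives $X=Y$ on each $\Omega_N$. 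A secondary point on existence: you propose tightness in $C([0,T])$, but to pass to the limit in the Z\"ahle integral you need a.s.\ convergence in a H\"older norm; the paper therefore proves tightness in the separable H\"older spaces $C_0^\eta$, $\eta<1/2$, via the moment bound $\mathbb{E}^W[|X_t^n-X_s^n|^{2N}]\le R_N|t-s|^N$ with an $\widehat{\mathcal F}_0$-measurable random constant, handled by a change of measure before applying the Kolmogorov--Lamperti criterion.
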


\noindent\textbf{Remark} \ Notice that in all our results we can replace the
fractional Brownian motion $B^{H}$ by an arbitrary stochastic process with
H\"{o}lder continuous trajectories of order $\gamma>\frac{1}{2}$.

\setcounter{equation}{0}

\section{Integral estimates}

In this section we will first define the integral with respect to fBm as a
generalized Stieltjes integral, following the work of Z\"{a}hle \cite{Zahle}.
We also present some basic estimates of this integral.

Let $f\in L^{1}(a,b)$ and $\alpha>0$. The left-sided and right-sided
fractional Riemann-Liouville integrals of $f$ of order $\alpha$ are defined
for almost all $x\in(a,b)$ by
\[
I_{a^{+}}^{\alpha}f(x):=\frac{1}{\Gamma(\alpha)}\int_{a}^{x}(x-y)^{\alpha
-1}f(y)\mathrm{d}y
\]
and
\[
I_{b^{-}}^{\alpha}f(x):=\frac{1}{\Gamma(\alpha)}\int_{x}^{b}(y-x)^{\alpha
-1}f(y)\mathrm{d}y
\]
respectively, where $\Gamma(\alpha):=\int_{0}^{\infty}r^{\alpha-1}%
e^{-r}\mathrm{d}r$ is the Euler gamma function. Let $I_{a^{+}}^{\alpha}%
(L^{p})$ (resp. $I_{b^{-}}^{\alpha}(L^{p})$) be the image of $L^{p}(a,b)$ by
the operator $I_{a^{+}}^{\alpha}$(resp. $I_{b^{-}}^{\alpha}$). If $f\in
I_{a^{+}}^{\alpha}(L^{p})$ (resp. $f\in I_{b^{-}}^{\alpha}(L^{p})$) and
$0<\alpha<1$ then the Weyl derivatives of $f$ are given by
\begin{equation}
D_{a^{+}}^{\alpha}f(x):=\frac{1}{\Gamma(1-\alpha)}\left(  \frac{f(x)}{\left(
x-a\right)  ^{\alpha}}+\alpha\int_{a}^{x}\frac{f(x)-f(y)}{(x-y)^{\alpha+1}%
}\mathrm{d}y\right)  1_{(a,b)}(x) \label{Fractional derivative a}%
\end{equation}
and%
\begin{equation}
D_{b^{-}}^{\alpha}f(x):=\frac{1}{\Gamma(1-\alpha)}\left(  \frac{f(x)}{\left(
b-x\right)  ^{\alpha}}+\alpha\int_{x}^{b}\frac{f(x)-f(y)}{(y-x)^{\alpha+1}%
}\mathrm{d}y\right)  1_{(a,b)}(x), \label{Fractional derivative b}%
\end{equation}
respectively, and are defined for almost all $x\in(a,b)$ (the convergence of
the integrals at the singularity $y=x$ holds pointwise for almost all
$x\in(a,b)$ if $p=1$ and moreover in $L^{p}$-sense if $1<p<\infty$).

We have that:

\begin{itemize}
\item If $\alpha<\frac{1}{p}$ and $q=\frac{p}{1-\alpha p}$ then $I_{a^{+}%
}^{\alpha}(L^{p})=I_{b^{-}}^{\alpha}(L^{p})\subset L^{q}(a,b).$

\item If $\alpha>\frac{1}{p}$ then $I_{a^{+}}^{\alpha}(L^{p})\cup I_{b^{-}%
}^{\alpha}(L^{p})\subset C^{\alpha-\frac{1}{p}}(a,b).$
\end{itemize}

The fractional integrals and derivatives are related by the inversion formulas%
\[
I_{a^{+}}^{\alpha}(D_{a^{+}}^{\alpha}f)=f,\ \ \forall f\in I_{a^{+}}^{\alpha
}(L^{p}),
\]%
\[
D_{a^{+}}^{\alpha}(I_{a^{+}}^{\alpha}f)=f,\ \ \forall f\in L^{1}(a,b),
\]
and similar formulas also hold for $I_{b^{-}}^{\alpha}$ and $D_{b^{-}}%
^{\alpha}$. We refer to \ \cite{Samko} for a detailed account on the
properties of fractional operators.

Let $f(a+):=\underset{\varepsilon\searrow0}{\lim}f(a+\varepsilon)$ and
$g(b-):=\underset{\varepsilon\searrow0}{\lim}g(b-\varepsilon)$ (we are
assuming that these limits exist and are finite) and define%

\[
f_{a^{+}}(x):=\left(  f(x)-f(a+)\right)  1_{(a,b)}(x),
\]%
\[
g_{b^{-}}(x):=\left(  g(x)-g(b-)\right)  1_{(a,b)}(x).
\]
We recall from \cite{Zahle} the definition of generalized Stieltjes
\ fractional integral with respect to irregular functions.

\begin{definition}
\textbf{(Generalized Stieltjes Integral) }Suppose that $f$ and $g$ are
functions such that $f(a+),g(a+)$ and $g(b-)$ exist, $f_{a^{+}}\in I_{a^{+}%
}^{\alpha}(L^{p})$ and $g_{b^{-}}\in$ $I_{b^{-}}^{1-\alpha}(L^{p}) $ for some
$p,q\geq1,1/p+1/q\leq1,0<\alpha<1$. Then the integral of $f$ with respect to
$g$ is defined by
\[
\int_{a}^{b}f\mathrm{d}g:=(-1)^{\alpha}\int_{a}^{b}D_{a^{+}}^{\alpha
}f(x)D_{b^{-}}^{1-\alpha}g_{b^{-}}(x)\mathrm{d}x+f(a+)\left(
g(b-)-g(a+)\right)  .
\]

\end{definition}

\begin{remark}
The above definition is simpler in the following cases.

\begin{itemize}
\item If $\alpha p<1$, under the assumptions of the preceding definition, we
have that $f\in I_{a^{+}}^{\alpha}(L^{p})$ and we can write
\begin{equation}
\int_{a}^{b}f\mathrm{d}g=(-1)^{\alpha}\int_{a}^{b}D_{a^{+}}^{\alpha}f_{a^{+}%
}(x)D_{b^{-}}^{1-\alpha}g_{b^{-}}(x)\mathrm{d}x. \label{DefRS}%
\end{equation}

\item If $f\in C^{\lambda}(a,b)$ and $g\in C^{\mu}(a,b)$ with $\lambda+\mu>1$
then (see \cite{Zahle}) we can choose $\alpha$ such that $1-\mu<\alpha
<\lambda$, the generalized Stieltjes integral exists, it is given by
(\ref{DefRS}) and coincides with the Riemann-Stieltjes integral.
\end{itemize}
\end{remark}

The linear spaces $I_{a^{+}}^{\alpha}(L^{p})$ are Banach spaces with respect
to the norms
\[
\left\|  f\right\|  _{I_{a^{+}}^{\alpha}(L^{p})}:=\left\|  f\right\|  _{L^{p}%
}+\left\|  D_{a^{+}}^{\alpha}f\right\|  _{L^{p}}\sim\left\|  D_{a^{+}}%
^{\alpha}f\right\|  _{L^{p}},
\]
and the same is true for the spaces $I_{b^{-}}^{\alpha}(L^{p})$. If $\alpha
p<1$ then the norms on $I_{a^{+}}^{\alpha}(L^{p})$ and $I_{b^{-}}^{\alpha
}(L^{p})$ are equivalent and if $a\leq c<d\leq b$, then
\[
\int_{c}^{d}f\mathrm{d}g:=\int_{a}^{b}\mathbf{1}_{(c,d)}f\mathrm{d}g.
\]

Now, fix the parameter $\alpha$ such that $0<\alpha<\frac{1}{2}$, denote by
$W_{T}^{1-\alpha,\infty}$ the space of measurable functions $g:\left[
0,T\right]  \rightarrow\mathbb{R}^{m}$ such that
\[
\left\|  g\right\|  _{1-\alpha,\infty,T}:=\underset{0<s<t<T}{\sup}\left(
\frac{\left|  g(t)-g(s)\right|  }{(t-s)^{1-\alpha}}+\int_{s}^{t}\frac{\left|
g(y)-g(s)\right|  }{(y-s)^{2-\alpha}}\mathrm{d}y\right)  <\infty.
\]
and denote by $W_{0}^{\alpha,1}$the space of measurable functions $f:\left[
0,T\right]  \rightarrow\mathbb{R}^{d}$ such that
\[
\left\|  f\right\|  _{\alpha,1}:=\int_{0}^{T}\frac{\left|  f(s)\right|
}{s^{\alpha}}\mathrm{d}s+\int_{0}^{T}\int_{0}^{s}\frac{\left|
f(s)-f(y)\right|  }{(s-y)^{\alpha+1}}\mathrm{d}y\mathrm{d}s<\infty.
\]
It is easy to prove that
\[
C^{1-\alpha+\varepsilon}\subset W_{T}^{1-\alpha,\infty}\subset C^{1-\alpha},
\]
for all $\varepsilon>0$. For $g\in W_{T}^{1-\alpha,\infty}$, we have that
\begin{align*}
\Lambda_{\alpha}(g)  &  :=\frac{1}{\Gamma(1-\alpha)}\underset{0<s<t<T}{\sup
}\left|  \left(  D_{t-}^{1-\alpha}g_{t-}\right)  (s)\right| \\
&  \leq\frac{1}{\Gamma(1-\alpha)\Gamma(\alpha)}\left\|  g\right\|
_{1-\alpha,\infty,T}<\infty.
\end{align*}
Moreover, if $f\in W_{0}^{\alpha,1}$ and $g\in$ $W_{T}^{1-\alpha,\infty}$ then
$\int_{0}^{t}f\mathrm{d}g$ exists for all $t\in\left[  0,T\right]  $ and
\begin{equation}
\left|  \int_{0}^{t}f\mathrm{d}g\right|  \leq\Lambda_{\alpha}(g)\left\|
f\right\|  _{\alpha,1}. \label{Gfa1}%
\end{equation}

Now we will deduce useful estimates for the integrals involved in Equation
(\ref{sde1}). Fix $\alpha\in(1-H,\frac{1}{2})$. We will denote by $C$ a
generic constant which depends on the constants $L_{i}$, $1\leq i\leq7$,
$\beta$ and $\delta$ in the assumptions, on $T$, $\alpha$ and the dimensions
$r,d,m$. For any function $f\in W_{0}^{\alpha,\infty}$ define
\[
F_{t}^{b}(f):=\int_{0}^{t}b(s,f(s))\mathrm{d}s.
\]

\begin{proposition}
\label{propFbf} \ If $f\in W_{0}^{\alpha,\infty}$ then $F^{b}\left(  f\right)
\in W_{0}^{\alpha,\infty}$ and for all $t\in\lbrack0,T]$
\begin{equation}
\left\|  F_{t}^{b}(f)\right\|  _{\alpha}\leq C\left(  \int_{0}^{t}%
\frac{\left|  f(s)\right|  }{(t-s)^{\alpha}}\mathrm{d}s+1\right)  .
\label{Fbf}%
\end{equation}

\end{proposition}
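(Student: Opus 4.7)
The plan is to verify each piece of the seminorm $\|F_t^b(f)\|_\alpha = |F_t^b(f)| + \int_0^t |F_t^b(f)-F_s^b(f)|(t-s)^{-\alpha-1}\,ds$ directly from the linear growth bound $|b(s,x)| \leq L_2(1+|x|)$ provided by (H$b$). The continuity of $b$ together with $f\in W_0^{\alpha,\infty}\subset C^{\alpha-\varepsilon}$ makes $s\mapsto b(s,f(s))$ bounded and measurable, so the Lebesgue integral defining $F^b(f)$ is well-defined and, being absolutely continuous, is in particular in $W_0^{\alpha,\infty}$ once the announced bound is proved.

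For the supremum term I would estimate
\[
|F_t^b(f)| \leq L_2\int_0^t (1+|f(s)|)\,\mathrm{d}s,
\]
and then use the trivial inequality $(t-s)^{-\alpha}\geq T^{-\alpha}$ for $s\in[0,t]$ to rewrite $\int_0^t |f(s)|\,\mathrm{d}s \leq T^{\alpha}\int_0^t |f(s)|(t-s)^{-\alpha}\,\mathrm{d}s$, absorbing $T$ and $T^\alpha$ into the constant $C$.

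The main work is the double-integral term. Using the linear growth of $b$ again I get
\[
|F_t^b(f)-F_s^b(f)| \leq L_2\int_s^t (1+|f(u)|)\,\mathrm{d}u,
\]
so the key step is to apply Fubini to
\[
\int_0^t\!\int_s^t \frac{1+|f(u)|}{(t-s)^{\alpha+1}}\,\mathrm{d}u\,\mathrm{d}s
= \int_0^t (1+|f(u)|)\int_0^u \frac{\mathrm{d}s}{(t-s)^{\alpha+1}}\,\mathrm{d}u,
\]
and bound the inner integral by $\frac{1}{\alpha}(t-u)^{-\alpha}$. This yields
\[
\int_0^t\!\frac{|F_t^b(f)-F_s^b(f)|}{(t-s)^{\alpha+1}}\,\mathrm{d}s
\;\leq\; C\!\left(\int_0^t \frac{\mathrm{d}u}{(t-u)^\alpha} + \int_0^t \frac{|f(u)|}{(t-u)^\alpha}\,\mathrm{d}u\right),
\]
and since $\int_0^t (t-u)^{-\alpha}\,\mathrm{d}u = \frac{t^{1-\alpha}}{1-\alpha}\leq C$, the first piece contributes only a constant. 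Combining this with the earlier bound on $|F_t^b(f)|$ gives (\ref{Fbf}) and, taking the supremum in $t$, shows $F^b(f)\in W_0^{\alpha,\infty}$ whenever the right-hand side is finite (which it is, since $f\in W_0^{\alpha,\infty}$ implies $\int_0^t |f(s)|(t-s)^{-\alpha}\,\mathrm{d}s<\infty$ by inclusion (\ref{Eq inclusion Holder W})). No real obstacle is expected beyond bookkeeping; the only subtle point is the Fubini step and the integrability of $(t-s)^{-\alpha-1}$ near $s=t$, which is handled precisely by replacing $\int_s^t$ with a well-behaved $(t-u)^{-\alpha}$ after switching order.
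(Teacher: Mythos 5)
Your argument is correct. The paper disposes of this proposition in three lines by citing Proposition 4.3 of Nualart--R\u{a}\c{s}canu, which asserts precisely the intermediate bound $\left\Vert \int_0^{\cdot} g(s)\,\mathrm{d}s\right\Vert_{\alpha}\leq C\int_0^t \left\vert g(s)\right\vert (t-s)^{-\alpha}\,\mathrm{d}s$ applied to $g(s)=b(s,f(s))$, and then invokes the linear growth in (H$b$). What you have done is reprove that cited lemma from scratch in this special case: the bound $\left\vert F_t^b(f)\right\vert\leq L_2\int_0^t(1+\left\vert f(s)\right\vert)\,\mathrm{d}s$ combined with $(t-s)^{-\alpha}\geq T^{-\alpha}$ handles the first term of $\left\Vert\cdot\right\Vert_{\alpha}$, and the Fubini--Tonelli interchange over $\{0\leq s\leq u\leq t\}$ followed by $\int_0^u(t-s)^{-\alpha-1}\,\mathrm{d}s\leq \alpha^{-1}(t-u)^{-\alpha}$ handles the second; both steps are sound, and the finiteness of the right-hand side follows simply from the boundedness of $f$ (which is already built into $\left\Vert f\right\Vert_{\alpha,\infty}<\infty$, so you do not even need the H\"{o}lder inclusion you invoke). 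Your version buys self-containedness at the cost of length; the paper's buys brevity by outsourcing exactly the Fubini computation you carried out. The only cosmetic point is that your appeal to continuity of $f$ and to (\ref{Eq inclusion Holder W}) for well-definedness is unnecessary: measurability and boundedness of $s\mapsto b(s,f(s))$ already follow from (H$b$) and the definition (\ref{fnorm}).
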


\begin{proof}
By Proposition 4.3 in \cite{Nualart} and the growth assumption in (H$b$) we
have that
\begin{align*}
\left\Vert F_{t}^{b}(f)\right\Vert _{\alpha}  &  \leq C\int_{0}^{t}%
\frac{\left\vert b(s,f(s))\right\vert }{(t-s)^{\alpha}}\mathrm{d}s\\
&  \leq C\int_{0}^{t}\frac{\left\vert f(s)\right\vert +1}{(t-s)^{\alpha}%
}\mathrm{d}s\\
&  \leq C\left(  \int_{0}^{t}\frac{\left\vert f(s)\right\vert }{(t-s)^{\alpha
}}\mathrm{d}s+1\right)  .
\end{align*}

\end{proof}

\begin{proposition}
If $f$, $h\in W_{0}^{\alpha,\infty}$ then for all $t\in\lbrack0,T]$
\begin{equation}
\left\Vert F_{t}^{b}(f)-F_{t}^{b}(h)\right\Vert _{\alpha}\leq C\int_{0}%
^{t}\frac{\left\Vert f(s)-h(s)\right\Vert _{\alpha}}{\left(  t-s\right)
^{\alpha}}\mathrm{d}s. \label{Fbfh}%
\end{equation}

\end{proposition}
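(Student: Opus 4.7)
The plan is to mimic the proof of the previous proposition (Proposition on $F^b_t(f)$), now exploiting the Lipschitz hypothesis in (H$b$) rather than the linear-growth hypothesis. Write
$$F_t^b(f) - F_t^b(h) = \int_0^t \bigl[b(s,f(s)) - b(s,h(s))\bigr]\,\mathrm{d}s,$$
and view this as an ordinary (Bochner/Lebesgue) integral of the integrand $u(s):=b(s,f(s))-b(s,h(s))$. The ingredient I would invoke is the same general estimate used in the previous proposition, namely Proposition 4.3 of the Nualart reference, which says that for any integrable $u$,
$$\Bigl\|\int_0^\cdot u(s)\,\mathrm{d}s\Bigr\|_\alpha(t) \;\le\; C\int_0^t \frac{|u(s)|}{(t-s)^\alpha}\,\mathrm{d}s.$$

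The key step is then to bound $|u(s)|$ pointwise. By (H$b$) we have
$$|b(s,f(s)) - b(s,h(s))| \;\le\; L_1\, |f(s)-h(s)|,$$
and observe that straight from the definition \eqref{alpha} one has $|g(s)| \le \|g(s)\|_\alpha$ for any $g$, so in particular $|f(s)-h(s)| \le \|f(s)-h(s)\|_\alpha$. Plugging these into the integral estimate above yields
$$\|F_t^b(f) - F_t^b(h)\|_\alpha \;\le\; C\,L_1 \int_0^t \frac{\|f(s)-h(s)\|_\alpha}{(t-s)^\alpha}\,\mathrm{d}s,$$
which is \eqref{Fbfh} after absorbing $L_1$ into the generic constant $C$.

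There is essentially no obstacle here: everything is a direct combination of the Lipschitz hypothesis (H$b$), the elementary bound $|g|\le\|g\|_\alpha$, and the already-applied norm estimate from Proposition 4.3 in \cite{Nualart}. The only minor point one needs to verify en route is that the integrand $u$ lies in $L^1(0,T)$ so that the $F^b$ operator is well defined on $f,h\in W_0^{\alpha,\infty}$; this follows because both $f$ and $h$ are bounded (by $\|\cdot\|_{\alpha,\infty}$) and $b$ is Lipschitz in $x$, uniformly in $t$, so $u$ is bounded on $[0,T]$.
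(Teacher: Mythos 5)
Your proof is correct and follows essentially the same route as the paper: both apply the norm estimate from Proposition 4.3 of \cite{Nualart} to the integrand $b(s,f(s))-b(s,h(s))$, invoke the Lipschitz bound in (H$b$), and then use the trivial inequality $|f(s)-h(s)|\leq\left\Vert f(s)-h(s)\right\Vert _{\alpha}$ (which the paper leaves implicit). No issues.
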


\begin{proof}
By Proposition 4.3 in \cite{Nualart} and the Lipschitz assumption in (H$b$),
we have that
\begin{align*}
\left\Vert F_{t}^{b}(f)-F_{t}^{b}(h)\right\Vert _{\alpha}  &  \leq C\int
_{0}^{t}\frac{\left\vert b(s,f(s)-b(s,h(s))\right\vert }{(t-s)^{\alpha}%
}\mathrm{d}s\\
&  \leq C\int_{0}^{t}\frac{\left\vert f(s)-h(s)\right\vert }{\left(
t-s\right)  ^{\alpha}}\mathrm{d}s.
\end{align*}

\end{proof}

Given a function $f\in W_{0}^{\alpha,\infty}$, let us define
\[
G_{t}^{\sigma H}(f):=\int_{0}^{t}\sigma_{H}(s,f(s))\mathrm{d}B_{s}^{H}.
\]

\begin{proposition}
Suppose $1-H<\alpha<\min(\frac{1}{2},\beta)$. Then for all $t\in\lbrack0,T]$
\begin{equation}
\left\|  G_{t}^{\sigma H}(f)\right\|  _{\alpha}\leq C\Lambda_{\alpha}%
(B^{H})\int_{0}^{t}\left(  (t-s)^{-2\alpha}\ +s^{-\alpha}\right)  \left(
1+\left\|  f(s)\right\|  _{\alpha}\right)  \mathrm{d}s. \label{GsigmaHf2}%
\end{equation}

\end{proposition}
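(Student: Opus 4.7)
The plan is to reduce everything to the pointwise estimate (\ref{Gfa1}) applied on sub-intervals $[s,t] \subset [0,T]$, and then carefully integrate out the singularities using a Beta-function computation. Since the condition $\alpha < \beta$ is available, the H\"older regularity of $\sigma_H$ in time is enough to make all the inner double integrals converge.

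First I would apply (\ref{Gfa1}) on $[s,t]$ (by the last remark of Section~3 this just means restricting $B^H$ to that interval, so $\Lambda_\alpha(B^H)$ still controls the Weyl derivative) to write
\begin{equation*}
|G_t^{\sigma H}(f) - G_s^{\sigma H}(f)| \leq C \Lambda_\alpha(B^H) \int_s^t \left( \frac{|\sigma_H(u,f(u))|}{(u-s)^{\alpha}} + \int_s^u \frac{|\sigma_H(u,f(u))-\sigma_H(v,f(v))|}{(u-v)^{\alpha+1}}\,\mathrm{d}v \right) \mathrm{d}u.
\end{equation*}
Then I would invoke (H$\sigma_H$), together with (\ref{HsigmaH growth}), to bound $|\sigma_H(u,f(u))| \leq L(1+|f(u)|) \leq L(1+\|f(u)\|_\alpha)$, and, by inserting $\sigma_H(v,f(u))$,
\begin{equation*}
|\sigma_H(u,f(u)) - \sigma_H(v,f(v))| \leq L_7 (u-v)^\beta + L_5 |f(u)-f(v)|.
\end{equation*}
The integral of $(u-v)^{\beta-\alpha-1}$ from $s$ to $u$ equals $(u-s)^{\beta-\alpha}/(\beta-\alpha)$, which is finite precisely because $\alpha < \beta$; the contribution from $|f(u)-f(v)|$ is bounded by $\|f(u)\|_\alpha$ by definition (\ref{alpha}). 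So the increment estimate becomes
\begin{equation*}
|G_t^{\sigma H}(f) - G_s^{\sigma H}(f)| \leq C \Lambda_\alpha(B^H) \left\{ \int_s^t \frac{1+\|f(u)\|_\alpha}{(u-s)^{\alpha}}\,\mathrm{d}u + \int_s^t \|f(u)\|_\alpha \,\mathrm{d}u + (t-s)^{\beta-\alpha+1} \right\}.
\end{equation*}

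Next I would take $s=0$ to bound $|G_t^{\sigma H}(f)|$, and then compute $\int_0^t (t-s)^{-\alpha-1}|G_t^{\sigma H}(f)-G_s^{\sigma H}(f)|\,\mathrm{d}s$ by Fubini. The key computation is
\begin{equation*}
\int_0^u \frac{\mathrm{d}s}{(t-s)^{\alpha+1}(u-s)^{\alpha}} \leq C (t-u)^{-2\alpha},
\end{equation*}
obtained by the substitution $s = u - (t-u)y$ and recognising a Beta integral (this requires only $\alpha < 1/2$). The remaining two summands yield, after Fubini, factors $(t-u)^{-\alpha}$ and a bounded function of $(t-s)$, both of which are dominated by $T^{\alpha}(t-u)^{-2\alpha}$ (respectively by $T^{2\alpha}(t-s)^{-2\alpha}$) once one notes that on $[0,T]$ one has $(t-s)^{-\alpha}\leq T^\alpha (t-s)^{-2\alpha}$.

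Collecting terms, all contributions combine into $((t-s)^{-2\alpha} + s^{-\alpha})(1+\|f(s)\|_\alpha)$, up to constants depending only on $T$, $\alpha$, $\beta$ and the $L_i$'s. The main technical obstacle I expect is doing the bookkeeping cleanly in the Fubini step — in particular verifying the Beta-integral bound and absorbing the lower-order $(t-s)^{-\alpha}$, $1$, and $(t-s)^{\beta-2\alpha}$ remainders into the desired $(t-s)^{-2\alpha}$ singularity; everything else is a direct application of (H$\sigma_H$) and the pathwise inequality (\ref{Gfa1}).
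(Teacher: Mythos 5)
Your argument is correct and is in substance the same as the paper's: the paper simply cites Proposition 4.1 of Nualart--R\u{a}\c{s}canu \cite{Nualart} for the bound $\left\Vert G_{t}^{\sigma H}(f)\right\Vert _{\alpha}\leq C\Lambda_{\alpha}(B^{H})\int_{0}^{t}\left((t-s)^{-2\alpha}+s^{-\alpha}\right)\left\Vert \sigma_{H}(s,f(s))\right\Vert _{\alpha}\mathrm{d}s$ and then, exactly as you do, uses (H$\sigma_{H}$) to get $\left\Vert \sigma_{H}(s,f(s))\right\Vert _{\alpha}\leq C\left(1+\left\Vert f(s)\right\Vert _{\alpha}\right)$, the condition $\alpha<\beta$ entering only to integrate $(s-r)^{\beta-\alpha-1}$. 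The only difference is that you re-derive the cited estimate from scratch via (\ref{Gfa1}) on subintervals, Fubini and the Beta-integral computation, which is precisely how it is proved in the reference, so there is no genuine gap.
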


\begin{proof}
By Proposition 4.1 of \cite{Nualart} and the H\"{o}lder continuity in time,
given in assumption (H$\sigma_{H}$), we have%
\begin{align*}
\left\Vert G_{t}^{\sigma H}(f)\right\Vert _{\alpha}  &  \leq C\Lambda_{\alpha
}(B^{H})\int_{0}^{t}\left(  (t-s)^{-2\alpha}\ +s^{-\alpha}\right)  \left\Vert
\sigma_{H}(s,f(s))\right\Vert _{\alpha}\mathrm{d}s\\
&  \leq C\Lambda_{\alpha}(B^{H})\int_{0}^{t}\left(  (t-s)^{-2\alpha
}\ +s^{-\alpha}\right)  \left(  1+\left\Vert f(s)\right\Vert _{\alpha}\right)
\mathrm{d}s.
\end{align*}

\end{proof}

\begin{proposition}
\label{PropGhfh}If $1-H<\alpha<\min\left(  \frac{1}{2},\beta\right)  $ and
$f$, $h\in W_{0}^{\alpha,\infty}$ then for all $t\in\lbrack0,T]$
\begin{align}
&  \left\|  G_{t}^{\sigma H}(f)-G_{t}^{\sigma H}(h)\right\|  _{\alpha}\leq
C\Lambda_{\alpha}(B^{H})\label{GHfh}\\
&  \times\int_{0}^{t}\left(  (t-s)^{-2\alpha}\ +s^{-\alpha}\right)  \left(
1+\Delta f(s)+\Delta h(s)\right)  \left\|  f(s)-h(s)\right\|  _{\alpha
}\mathrm{d}s,\nonumber
\end{align}
\qquad where we denote
\begin{equation}
\Delta f(s):=\int_{0}^{s}\frac{\left|  f(s)-f(r)\right|  ^{\delta}}{\left(
s-r\right)  ^{\alpha+1}}\mathrm{d}r. \label{Delta}%
\end{equation}

\end{proposition}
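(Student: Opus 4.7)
The plan is to apply to the integrand $\sigma_{H}(s,f(s)) - \sigma_{H}(s,h(s))$ exactly the same pathwise estimate from Proposition 4.1 of \cite{Nualart} that was used in the previous proposition. This reduces matters to the pointwise bound
\[
\left\Vert \Phi(s)\right\Vert _{\alpha} \leq C\left( 1+\Delta f(s)+\Delta h(s)\right) \left\Vert f(s)-h(s)\right\Vert _{\alpha},
\]
where $\Phi(s):=\sigma_{H}(s,f(s))-\sigma_{H}(s,h(s))$; integrating this against $\Lambda_{\alpha}(B^{H})\left( (t-s)^{-2\alpha}+s^{-\alpha}\right) $ will yield (\ref{GHfh}).

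The supremum part $\left\vert \Phi(s)\right\vert \leq L_{5}\left\vert f(s)-h(s)\right\vert $ is immediate from $\left\vert \partial _{x_{i}}\sigma_{H}\right\vert \leq L_{5}$. For the fractional seminorm piece I would split the increment $\Phi(s)-\Phi(r)=A_{1}+A_{2}$, where
\[
A_{1}=[\sigma_{H}(s,f(s))-\sigma_{H}(s,h(s))]-[\sigma_{H}(s,f(r))-\sigma_{H}(s,h(r))],
\]
\[
A_{2}=[\sigma_{H}(s,f(r))-\sigma_{H}(s,h(r))]-[\sigma_{H}(r,f(r))-\sigma_{H}(r,h(r))].
\]
For $A_{1}$, rewriting each inner difference via the fundamental theorem of calculus along a common parameter $\theta\in[0,1]$ on the segment from $h(\cdot)$ to $f(\cdot)$ and then subtracting produces the estimate
\[
\left\vert A_{1}\right\vert \leq L_{6}\bigl( \left\vert f(s)-f(r)\right\vert^{\delta}+\left\vert h(s)-h(r)\right\vert^{\delta}\bigr) \left\vert f(s)-h(s)\right\vert + L_{5}\left\vert (f-h)(s)-(f-h)(r)\right\vert,
\]
using the Hölder bound on $\partial_{x_{i}}\sigma_{H}$ in $x$ for the first term and $\left\vert \partial_{x_{i}}\sigma_{H}\right\vert \leq L_{5}$ for the second. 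Dividing by $(s-r)^{\alpha+1}$ and integrating on $(0,s)$ gives $C\left( \Delta f(s)+\Delta h(s)\right) \left\vert f(s)-h(s)\right\vert + C\left\Vert f(s)-h(s)\right\Vert _{\alpha}$ by the very definition of $\Delta$ and of $\left\Vert \cdot \right\Vert _{\alpha}$.

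For $A_{2}$, one application of the mean value theorem in $x$ together with the temporal Hölder estimate on $\partial_{x_{i}}\sigma_{H}$ yields $\left\vert A_{2}\right\vert \leq L_{7}(s-r)^{\beta}\left\vert f(r)-h(r)\right\vert $. The main technical obstacle is then to control
\[
\int_{0}^{s}(s-r)^{\beta-\alpha-1}\left\vert f(r)-h(r)\right\vert \mathrm{d}r,
\]
since the integrand depends on $\left\vert (f-h)(r)\right\vert $ at $r<s$, not on the value at $s$. I would split $\left\vert f(r)-h(r)\right\vert \leq \left\vert f(s)-h(s)\right\vert + \left\vert (f-h)(s)-(f-h)(r)\right\vert $; the first piece produces $\frac{s^{\beta-\alpha}}{\beta-\alpha}\left\vert f(s)-h(s)\right\vert $ (finite thanks to the hypothesis $\alpha<\beta$), and for the second piece the bound $(s-r)^{\beta-\alpha-1}\leq T^{\beta}(s-r)^{-\alpha-1}$ gives $T^{\beta}\left\Vert f(s)-h(s)\right\Vert _{\alpha}$ directly from the definition of the seminorm. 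Combining all contributions and absorbing $\left\vert f(s)-h(s)\right\vert \leq \left\Vert f(s)-h(s)\right\Vert _{\alpha}$ yields the required pointwise estimate on $\left\Vert \Phi(s)\right\Vert _{\alpha}$, completing the proof.
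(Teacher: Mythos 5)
Your proof is correct and follows essentially the same route as the paper: reduce via Proposition 4.1 of \cite{Nualart} to a pointwise bound on $\left\Vert \sigma_{H}(s,f(s))-\sigma_{H}(s,h(s))\right\Vert _{\alpha}$ and then control the four-point increment of $\sigma_{H}$. The only deviation is that where the paper simply quotes Lemma 7.1 of \cite{Nualart} (whose temporal term reads $L_{7}\left\vert f(s)-h(s)\right\vert (s-r)^{\beta}$), you re-derive the increment bound by hand and your temporal term carries $\left\vert f(r)-h(r)\right\vert $ instead; your extra step converting this into $\left\Vert f(s)-h(s)\right\Vert _{\alpha}$ using $\alpha<\beta$ is a harmless and correctly handled variant.
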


\begin{proof}
From Proposition 4.1. of \cite{Nualart}, we have that
\begin{align*}
&  \left\|  G_{t}^{\sigma H}(f)-G_{t}^{\sigma H}(h)\right\|  _{\alpha}\leq
C\Lambda_{\alpha}(B^{H})\int_{0}^{t}\left(  (t-s)^{-2\alpha}\ +s^{-\alpha
}\right) \\
&  \times\left\|  \sigma_{H}(s,f(s))-\sigma_{H}(s,h(s))\right\|  _{\alpha
}\mathrm{d}s\\
&  \leq C\Lambda_{\alpha}(B^{H})\int_{0}^{t}\left(  (t-s)^{-2\alpha
}\ +s^{-\alpha}\right)  \left(  \left|  \sigma_{H}(s,f(s))-\sigma
_{H}(s,h(s))\right|  \right. \\
&  \left.  +\int_{0}^{s}\frac{\left|  \sigma_{H}(s,f(s))-\sigma_{H}%
(s,h(s))-\sigma_{H}(r,f(r))+\sigma_{H}(r,h(r))\right|  }{(s-r)^{\alpha+1}%
}\mathrm{d}r\right)  \mathrm{d}s.
\end{align*}
Now, using the assumptions in (H$\sigma_{H}$) and Lemma 7.1 in
\ \cite{Nualart}, we have that%
\begin{align*}
&  \left|  \sigma(t_{1},x_{1})-\sigma(t_{2},x_{2})-\sigma(t_{1},x_{3}%
)+\sigma(t_{2},x_{4})\right|  \leq\\
&  \leq L_{5}\left|  x_{1}-x_{2}-x_{3}+x_{4}\right|  +L_{7}\left|  x_{1}%
-x_{3}\right|  \left|  t_{2}-t_{1}\right|  ^{\beta}\\
&  +L_{6}\left|  x_{1}-x_{3}\right|  \left(  \left|  x_{1}-x_{2}\right|
^{\delta}+\left|  x_{3}-x_{4}\right|  ^{\delta}\right)  .
\end{align*}
As a consequence,
\begin{align*}
&  \left\|  G_{t}^{\sigma H}(f)-G_{t}^{\sigma H}(h)\right\|  _{\alpha}\leq
C\Lambda_{\alpha}(B^{H})\int_{0}^{t}\left(  (t-s)^{-2\alpha}\ +s^{-\alpha
}\right) \\
&  \mathbb{\times}\left(  1+\Delta f(s)+\Delta h(s)\right)  \left(  \left|
f(s)-h(s)\right|  +\int_{0}^{s}\frac{\left|  f(s)-h(s)-f(r)+h(r)\right|
}{(s-r)^{\alpha+1}}\mathrm{d}r\right)  \mathrm{d}s\\
&  \leq C\Lambda_{\alpha}(B^{H})\int_{0}^{t}\left(  (t-s)^{-2\alpha
}\ +s^{-\alpha}\right) \\
&  \times\left(  1+\left(  \Delta f(s)\right)  ^{2}+\left(  \Delta
h(s)\right)  ^{2}\right)  \left\|  f(s)-h(s)\right\|  _{\alpha}^{2}%
\mathrm{d}s.
\end{align*}

\end{proof}

Finally, we will consider the It\^{o} stochastic integral with respect to the
$r$-dimensional standard Brownian motion $W$. The following lemma is an
immediate consequence of It\^{o} calculus. \ 

\begin{lemma}
Suppose that $u=\{u(t),t\in\lbrack0,T]\}$ is an $r$-dimensional $\mathcal{G}%
_{t}$-adapted stochastic process such that $\ \int_{0}^{T}\mathbb{E}%
^{W}\left[  u(s)^{2}\right]  \mathrm{d}s<\infty$. Then for all $t\in
\lbrack0,T]$ a.e.%
\begin{equation}
\mathbb{E}^{W}\left[  \left\|  \int_{0}^{t}u(s)\mathrm{d}W_{s}\right\|
_{\alpha}^{2}\right]  \leq C\int_{0}^{t}(t-s)^{-\frac{1}{2}-\alpha}%
\mathbb{E}^{W}\left[  u(s)^{2}\right]  \mathrm{d}s. \label{GWf}%
\end{equation}

\end{lemma}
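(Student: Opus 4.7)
Set $M_t := \int_0^t u(s)\,dW_s$. The plan is to use the elementary bound $(a+b)^2 \le 2a^2 + 2b^2$ in the definition (\ref{alpha}) of $\|\cdot\|_\alpha$ and to estimate the two resulting summands
\[
\mathbb{E}^W|M_t|^2 \qquad \text{and} \qquad \mathbb{E}^W\Bigl(\int_0^t \tfrac{|M_t-M_s|}{(t-s)^{\alpha+1}}\,ds\Bigr)^2
\]
separately. The crucial preliminary observation is that $W$ is independent of $\widehat{\mathcal F}_0=\sigma(X_0,B^H)$ (because $X_0$ is independent of $(B^H,W)$ and $W$ is independent of $B^H$), so that conditionally on $\widehat{\mathcal F}_0$ the process $W$ is still a standard Brownian motion and $u$ is still adapted to the conditional filtration. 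Hence the It\^o isometry is available under $\mathbb{E}^W$ and gives
\[
\mathbb{E}^W\bigl[|M_t-M_s|^2\bigr] = \int_s^t \mathbb{E}^W\bigl[|u(r)|^2\bigr]\,dr,\qquad 0\le s\le t.
\]

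The first summand is immediate: $\mathbb{E}^W|M_t|^2 = \int_0^t \mathbb{E}^W|u(s)|^2\,ds \le T^{\alpha+1/2}\int_0^t (t-s)^{-\alpha-1/2}\mathbb{E}^W|u(s)|^2\,ds$. For the second summand I would split the weight $(t-s)^{-\alpha-1} = (t-s)^{-(\alpha+1/2)/2}\cdot(t-s)^{-(\alpha+3/2)/2}$ and apply Cauchy--Schwarz:
\[
\Bigl(\int_0^t \tfrac{|M_t-M_s|}{(t-s)^{\alpha+1}}\,ds\Bigr)^2 \le \Bigl(\int_0^t (t-s)^{-\alpha-1/2}\,ds\Bigr)\int_0^t \tfrac{|M_t-M_s|^2}{(t-s)^{\alpha+3/2}}\,ds.
\]
The prefactor is bounded by $T^{1/2-\alpha}/(\tfrac12-\alpha)$, a finite constant, precisely because $\alpha<\tfrac12$. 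Taking $\mathbb{E}^W$ of the second factor, substituting the It\^o isometry identity, and exchanging the order of integration by Fubini yields
\[
\int_0^t \mathbb{E}^W|u(r)|^2\,\Bigl(\int_0^r (t-s)^{-\alpha-3/2}\,ds\Bigr)\,dr \le (\alpha+\tfrac12)^{-1}\int_0^t (t-r)^{-\alpha-1/2}\mathbb{E}^W|u(r)|^2\,dr,
\]
where the inner integral is computed directly. Combining the two bounds produces (\ref{GWf}).

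There is no real obstacle; the only mildly delicate step is the rigorous justification of It\^o's isometry under the conditional expectation $\mathbb{E}^W$, which is standard once one notes the independence of $W$ and $\widehat{\mathcal F}_0$. The choice of Cauchy--Schwarz splitting exponent is also forced: writing the prefactor as $(t-s)^{-a}$ one needs $a<1$ for integrability and $a=\alpha+\tfrac12$ so that the exponent produced after Fubini matches the target $\alpha+\tfrac12$, and these two requirements are simultaneously feasible exactly under the standing hypothesis $\alpha<\tfrac12$.
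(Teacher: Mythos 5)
Your proof is correct and follows essentially the same route as the paper: the same Cauchy--Schwarz splitting of the weight $(t-s)^{-\alpha-1}$ into $(t-s)^{-(\alpha+1/2)/2}(t-s)^{-(\alpha+3/2)/2}$, followed by the conditional It\^o isometry and Fubini, yielding the same exponents and constants. The only (welcome) addition is your explicit remark justifying the isometry under $\mathbb{E}^{W}$ via the independence of $W$ and $\widehat{\mathcal F}_0$, which the paper leaves implicit.
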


\begin{proof}
Notice first that, by Fubini's theorem, the right-hand side of (\ref{GWf}) is
finite for all $t\in\lbrack0,T]$ a.e. Applying the It\^{o} isometry property
and the Cauchy-Schwarz inequality, we have that:
\begin{align*}
&  \mathbb{E}^{W}\left[  \left\|  \int_{0}^{t}u(s)\mathrm{d}W_{s}\right\|
_{\alpha}^{2}\right]  \leq C\mathbb{E}^{W}\left[  \left|  \int_{0}%
^{t}u(s)\mathrm{d}W_{s}\right|  ^{2}+\left(  \int_{0}^{t}\frac{\left|
\int_{s}^{t}u(r)\mathrm{d}W_{r}\right|  }{\left(  t-s\right)  ^{\alpha+1}%
}\mathrm{d}s\right)  ^{2}\right] \\
&  \leq C\left[  \int_{0}^{t}\mathbb{E}^{W}\left[  u(s)^{2}\right]
\mathrm{d}s+\frac{T^{\frac{1}{2}-\alpha}}{\frac{1}{2}-\alpha}\int_{0}^{t}%
\frac{\mathbb{E}^{W}\left|  \int_{s}^{t}u(r)\mathrm{d}W_{r}\right|  ^{2}%
}{\left(  t-s\right)  ^{\frac{3}{2}+\alpha}}\mathrm{d}s\right]  .
\end{align*}
Therefore, by the It\^{o} isometry and Fubini's theorem we obtain
\begin{align*}
&  \mathbb{E}^{W}\left[  \left\|  \int_{0}^{t}u(s)\mathrm{d}W_{s}\right\|
_{\alpha}^{2}\right] \\
&  \leq C\left[  \int_{0}^{t}\mathbb{E}^{W}\left[  u(s)^{2}\right]
\mathrm{d}s+\frac{T^{\frac{1}{2}-\alpha}}{\left(  \frac{1}{2}-\alpha\right)
}\int_{0}^{t}\left[  \int_{0}^{r}\left(  t-s\right)  ^{-\frac{3}{2}-\alpha
}\mathrm{d}s\right]  \mathbb{E}^{W}\left[  u(r)^{2}\right]  \mathrm{d}r\right]
\\
&  \leq C\left[  \int_{0}^{t}\mathbb{E}^{W}\left[  u(s)^{2}\right]
\mathrm{d}s+\frac{T^{\frac{1}{2}-\alpha}}{\left(  \frac{1}{2}-\alpha\right)
\left(  \frac{1}{2}+\alpha\right)  }\int_{0}^{t}\frac{\mathbb{E}^{W}\left[
u(r)^{2}\right]  }{\left(  t-r\right)  ^{\frac{1}{2}+\alpha}}\mathrm{d}%
r\right] \\
&  \leq C\int_{0}^{t}\left(  t-r\right)  ^{-\frac{1}{2}-\alpha}\mathbb{E}%
^{W}\left[  u(r)^{2}\right]  \mathrm{d}r.
\end{align*}

\end{proof}

Assume that $f=\{f(t),t\in\lbrack0,T]\}$ is a $d$-dimensional stochastic
process in $\mathbb{W}_{\mathcal{G}}$. Define%

\[
G_{t}^{\sigma W}(f):=\int_{0}^{t}\sigma(s,f(s))\mathrm{d}W_{s}.
\]
We have the following estimates for these integrals:

\begin{proposition}
Let $\ f\in\mathbb{W}_{\mathcal{G}}$. Then for all $t\in\lbrack0,T]$ a.e.
\begin{equation}
\mathbb{E}^{W}\left[  \left\|  G_{t}^{\sigma W}(f)\right\|  _{\alpha}%
^{2}\right]  \leq C\int_{0}^{t}(t-s)^{-\frac{1}{2}-\alpha}\left[
1+\mathbb{E}^{W}\left[  \left\|  f(s)\right\|  _{\alpha}^{2}\right]  \right]
\mathrm{d}s. \label{GsigmaWf2}%
\end{equation}

\end{proposition}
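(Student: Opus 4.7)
The estimate is a direct application of the preceding lemma to the integrand $u(s):=\sigma_{W}(s,f(s))$, combined with the linear growth hypothesis in (H$\sigma_{W}$). My plan is therefore to verify that $u$ falls within the scope of that lemma and then to replace $|u(s)|^{2}$ inside the bound (\ref{GWf}) by a quantity controlled by $1+\|f(s)\|_{\alpha}^{2}$.

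First I would check adaptedness and square-integrability of $u$. Since $f\in\mathbb{W}_{\mathcal{G}}$ is $\mathcal{G}_{t}$-adapted and $\sigma_{W}$ is jointly continuous, the process $u$ is $\mathcal{G}_{t}$-adapted. The integrability $\int_{0}^{T}\mathbb{E}^{W}[u(s)^{2}]\mathrm{d}s<\infty$ follows from the linear growth bound $|\sigma_{W}(s,x)|\leq L_{4}(1+|x|)$, from the pointwise inequality $|f(s)|\leq\|f(s)\|_{\alpha}$ that reads off directly from the definition (\ref{alpha}), and from the defining condition $\int_{0}^{T}\mathbb{E}^{W}[\|f(s)\|_{\alpha}^{2}]\mathrm{d}s<\infty$ of the space $\mathbb{W}_{\mathcal{G}}$. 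With these verifications in place, inequality (\ref{GWf}) applies and yields
\begin{equation*}
\mathbb{E}^{W}\left[\left\|G_{t}^{\sigma W}(f)\right\|_{\alpha}^{2}\right]\leq C\int_{0}^{t}(t-s)^{-\frac{1}{2}-\alpha}\mathbb{E}^{W}\left[|\sigma_{W}(s,f(s))|^{2}\right]\mathrm{d}s.
\end{equation*}

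To finish I would insert the pointwise bound $|\sigma_{W}(s,f(s))|^{2}\leq 2L_{4}^{2}(1+|f(s)|^{2})\leq C(1+\|f(s)\|_{\alpha}^{2})$ under the integral, absorbing $L_{4}$ into the generic constant $C$ as permitted by the conventions declared at the start of Section~3. This immediately produces (\ref{GsigmaWf2}). I do not anticipate any serious obstacle: the statement is essentially a corollary of the preceding It\^{o}-type lemma, and the only point deserving a brief justification is that the $\|\cdot\|_{\alpha}$-norm dominates the pointwise modulus $|f(s)|$, so that the linear growth of $\sigma_{W}$ in its spatial variable transfers automatically to a linear growth in terms of $\|f(s)\|_{\alpha}$.
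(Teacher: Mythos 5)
Your proposal is correct and follows exactly the paper's own route: the authors likewise obtain (\ref{GsigmaWf2}) by applying the preceding It\^{o}-integral lemma (\ref{GWf}) to $u(s)=\sigma_{W}(s,f(s))$ and invoking the linear growth bound in (H$\sigma_{W}$). Your additional verifications of adaptedness, square-integrability, and the inequality $|f(s)|\leq\|f(s)\|_{\alpha}$ are sound and merely make explicit what the paper leaves implicit.
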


\begin{proof}
It follows from (\ref{GWf}) and the linear growth assumption in $\ $%
(H$\sigma_{W}$).
\end{proof}

\begin{proposition}
Let $\ f,h\in$ $\mathbb{W}_{\mathcal{G}}$. Then for all $t\in\lbrack0,T]$
a.e.
\begin{equation}
\mathbb{E}^{W}\left[  \left\|  G_{t}^{\sigma W}(f)-G_{t}^{\sigma
W}(h)\right\|  _{\alpha}^{2}\right]  \leq C\int_{0}^{t}(t-s)^{-\frac{1}%
{2}-\alpha}\mathbb{E}^{W}\left[  \left|  f(s)-h(s)\right|  ^{2}\right]
\mathrm{d}s. \label{GW2}%
\end{equation}

\end{proposition}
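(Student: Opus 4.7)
The plan is to recognize this proposition as a direct corollary of the preceding It\^{o}-type estimate (\ref{GWf}), once the Lipschitz hypothesis on $\sigma_{W}$ is used to dominate the integrand. Concretely, I would write the difference of stochastic integrals as a single one,
\[
G_{t}^{\sigma W}(f)-G_{t}^{\sigma W}(h)=\int_{0}^{t}u(s)\,\mathrm{d}W_{s},\qquad u(s):=\sigma_{W}(s,f(s))-\sigma_{W}(s,h(s)),
\]
and then apply (\ref{GWf}) to this particular $u$.

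Before invoking (\ref{GWf}) I have to verify its hypothesis, namely that $u$ is $\mathcal{G}_{t}$-adapted and $\int_{0}^{T}\mathbb{E}^{W}\left[  \left|  u(s)\right|^{2}\right]  \mathrm{d}s<\infty$. Adaptedness follows from $f,h\in\mathbb{W}_{\mathcal{G}}$ together with the continuity of $\sigma_{W}$. For the integrability, the Lipschitz bound in (H$\sigma_{W}$) gives
\[
\left|  u(s)\right|^{2}\leq L_{3}^{2}\left|  f(s)-h(s)\right|^{2}\leq 2L_{3}^{2}\left(  \left\|  f(s)\right\|_{\alpha}^{2}+\left\|  h(s)\right\|_{\alpha}^{2}\right),
\]
where I have used the trivial inequality $|g(s)|\leq\left\|  g(s)\right\|_{\alpha}$ that is built into the definition (\ref{alpha}); the right-hand side is conditionally $L^{1}$ in $s$ on $[0,T]$ by the very definition of $\mathbb{W}_{\mathcal{G}}$.

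Applying (\ref{GWf}) to this $u$ and then inserting the same Lipschitz bound $\left|  u(s)\right|^{2}\leq L_{3}^{2}\left|  f(s)-h(s)\right|^{2}$ inside the resulting integral, absorbing $L_{3}^{2}$ into the generic constant $C$, yields exactly (\ref{GW2}). There is no genuine obstacle here: the delicate work, that is the It\^{o} isometry combined with the Fubini step that converts the $(t-s)^{-\alpha-1}$ weight coming from the $\left\|\cdot\right\|_{\alpha}$ seminorm into the $(t-s)^{-\frac{1}{2}-\alpha}$ weight outside the expectation, has already been carried out in the preceding lemma, and the Lipschitz assumption (H$\sigma_{W}$) is exactly what is needed to trade $|u(s)|^{2}$ for $|f(s)-h(s)|^{2}$. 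The only mild point to check is the $L^{2}$-integrability required in order to invoke the lemma, and this is immediate from the definition of $\mathbb{W}_{\mathcal{G}}$.
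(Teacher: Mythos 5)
Your proposal is correct and follows essentially the same route as the paper: apply the It\^{o}-integral estimate (\ref{GWf}) to $u(s)=\sigma_{W}(s,f(s))-\sigma_{W}(s,h(s))$ and then use the Lipschitz bound in (H$\sigma_{W}$) to replace $|u(s)|^{2}$ by $C|f(s)-h(s)|^{2}$. Your explicit verification of the adaptedness and square-integrability hypotheses of the lemma is a detail the paper leaves implicit, but the argument is the same.
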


\begin{proof}
By estimate (\ref{GWf}) and the Lipschitz assumption in (H$\sigma_{W}$), we
obtain
\begin{align*}
&  \mathbb{E}^{W}\left[  \left\Vert G_{t}^{\sigma W}(f)-G_{t}^{\sigma
W}(h)\right\Vert _{\alpha}^{2}\right] \\
&  \leq C\int_{0}^{t}(t-s)^{-\frac{1}{2}-\alpha}\mathbb{E}^{W}\left[
\left\vert \sigma_{W}(s,f(s))-\sigma_{W}(s,h(s))\right\vert ^{2}\right]
\mathrm{d}s\\
&  \leq C\int_{0}^{t}(t-s)^{-\frac{1}{2}-\alpha}\mathbb{E}^{W}\left[
\left\vert f(s)-h(s)\right\vert ^{2}\right]  \mathrm{d}s.
\end{align*}

\end{proof}

\setcounter{equation}{0}

\section{Pathwise uniqueness}

In this section we define the notion of weak solution for the stochastic
differential equation (\ref{sde1}) and we discuss the pathwise uniqueness of a solution.

\begin{definition}
\label{Def: Weak Solution}A weak solution of the stochastic differential
equation (\ref{sde1}) is a triple $\left(  X,B^{H},W\right)  ,$ $(\Omega
,\mathcal{F},P),$ $\left\{  \mathcal{G}_{t},t\in\lbrack0,T]\right\}  ,$ where

\begin{enumerate}
\item $(\Omega,\mathcal{F},P)$ is a complete probability space, $\left\{
\mathcal{G}_{t}\right\}  $ is a \ right-continuous filtration such that
$\mathcal{G}_{0}$ contains the $P$-\ null sets.

\item $W$ is a $\mathcal{G}_{t}$-$r$-dimensional Brownian motion.

\item $B^{H}$ is a fractional Brownian motion of Hurst parameter $H$ which is
$\mathcal{G}_{0}$-measurable.

\item The process $X$ is $\mathcal{G}_{t}$-adapted, has trajectories in
$W_{0}^{\alpha,\infty}$ almost surely, and $\int_{0}^{T}\mathbb{E}^{W}\left[
\left\|  X_{s}\right\|  _{\alpha}^{2}\right]  \mathrm{d}s<\infty$ a.s.

\item $\left(  X,B^{H},W\right)  $ satisfies Equation (\ref{sde1}) a.s.
\end{enumerate}
\end{definition}

\begin{definition}
We say that pathwise uniqueness holds for Equation (\ref{sde1}) if, whenever
$\left(  X,W,B^{H}\right)  $ and $\left(  Y,W,B^{H}\right)  $\ are two weak
solutions, defined on the same probability space $(\Omega,\mathcal{F},P)$ with
the same filtration $\left\{  \mathcal{G}_{t}\right\}  $ and $X_{0}=Y_{0}$
a.s., then $\ X=Y$.
\end{definition}

$\ $We will make use of the following technical lemma.

\begin{lemma}
\label{LemmaH} Let $0<\eta<1/2$. If $f$ is a continuous function such that
$\left\Vert f\right\Vert _{\eta}\leq N$ and $\alpha<\eta\delta$, then
$\Delta(f)$ is bounded by a constant $C$ depending on $T$, $N$, $\alpha$,
$\delta$, and $\eta$, where we use the notation introduced in (\ref{Hnorm})
and (\ref{Delta}).
\end{lemma}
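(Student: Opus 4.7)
The plan is a direct estimation using the assumed Hölder bound. Since $\|f\|_\eta \le N$, the definition (\ref{Hnorm}) of the Hölder norm gives the pointwise bound
\[
|f(s)-f(r)| \le N\,(s-r)^{\eta}
\]
for all $0 \le r \le s \le T$. Raising both sides to the power $\delta \in (0,1]$ yields $|f(s)-f(r)|^{\delta} \le N^{\delta}(s-r)^{\eta\delta}$.

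First I would substitute this into the definition (\ref{Delta}) of $\Delta f(s)$ and bound
\[
\Delta f(s) \;=\; \int_{0}^{s}\frac{|f(s)-f(r)|^{\delta}}{(s-r)^{\alpha+1}}\,\mathrm{d}r \;\le\; N^{\delta}\int_{0}^{s}(s-r)^{\eta\delta-\alpha-1}\,\mathrm{d}r.
\]
The crucial point is that the hypothesis $\alpha < \eta\delta$ makes the exponent satisfy $\eta\delta - \alpha - 1 > -1$, so the integral is convergent at the singularity $r=s$. Computing it explicitly gives $\int_{0}^{s}(s-r)^{\eta\delta-\alpha-1}\,\mathrm{d}r = \frac{s^{\eta\delta-\alpha}}{\eta\delta-\alpha} \le \frac{T^{\eta\delta-\alpha}}{\eta\delta-\alpha}$, uniformly in $s \in [0,T]$.

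Combining these bounds and taking the supremum over $s \in [0,T]$ produces
\[
\sup_{s\in[0,T]} \Delta f(s) \;\le\; \frac{N^{\delta}\,T^{\eta\delta-\alpha}}{\eta\delta-\alpha} \;=:\; C,
\]
which depends only on $T$, $N$, $\alpha$, $\delta$, and $\eta$, as required. There is no genuine obstacle here: the entire content of the lemma is the observation that the gap between the exponents, controlled by $\alpha < \eta\delta$, makes the otherwise singular kernel integrable, and the Hölder bound converts the integral into an elementary beta-type computation.
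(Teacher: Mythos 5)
Your proof is correct and follows exactly the paper's argument: the Hölder bound $|f(s)-f(r)|\le N(s-r)^{\eta}$ raised to the power $\delta$ turns the integrand into $(s-r)^{\eta\delta-\alpha-1}$, which is integrable since $\alpha<\eta\delta$, yielding the same constant $N^{\delta}T^{\eta\delta-\alpha}/(\eta\delta-\alpha)$ that the paper states. You have simply written out the one-line computation the paper leaves implicit.
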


\begin{proof}
Clearly%
\[
\Delta(f)(s)=\int_{0}^{s}\frac{\left\vert f(s)-f(r)\right\vert ^{\delta}%
}{(s-r)^{\alpha+1}}\mathrm{d}r\leq N^{\delta}\frac{T^{\eta\delta-\alpha}}%
{\eta\delta-\alpha},
\]
which gives the result.
\end{proof}

Let $f\in W_{0}^{\alpha,\infty}$. By the estimates proved in Proposition 4.2
and Proposition 4.4 of \cite{Nualart}, the sample paths of the\ integral
processes $F^{b}(f)$ and $G^{\sigma H}(f)$ are continuously differentiable and
$\eta$-H\"{o}lder continuous of order $\eta<1-\alpha$, respectively.
Therefore, if $X$ is a weak solution of (\ref{sde1}), then the trajectories of
$X$ are $\eta$-H\"{o}lder continuous for all $\eta<1/2$.

\begin{theorem}
\label{Uniqueness}\textbf{(Pathwise uniqueness)} Let $1-H<\alpha<\min\left\{
\beta,\frac{\delta}{2},\frac{1}{2}\right\}  $. Then, the pathwise uniqueness
property holds for Equation (\ref{sde1}).
\end{theorem}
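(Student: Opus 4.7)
The plan is to derive a Volterra-type integral inequality for $\mathbb{E}^{W}\bigl[\|X_t - Y_t\|_\alpha^2\bigr]$ on a localized time interval and then to close the argument with a Gronwall lemma for singular kernels. Set $Z_t := X_t - Y_t$. Since $X_0 = Y_0$ and both $X$ and $Y$ solve (\ref{sde1}) on the same space and filtration with the same driving noises, the difference splits into three pieces
\[
Z_t = [F_t^{b}(X) - F_t^{b}(Y)] + [G_t^{\sigma W}(X) - G_t^{\sigma W}(Y)] + [G_t^{\sigma H}(X) - G_t^{\sigma H}(Y)].
\]

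The fractional-integral estimate (\ref{GHfh}) is pathwise and carries the random factor $\Lambda_\alpha(B^H)$ together with the random quantities $\Delta X(s),\Delta Y(s)$ from (\ref{Delta}). Since $B^H$ is $\mathcal{G}_0$-measurable, $\Lambda_\alpha(B^H)$ behaves as a constant under $\mathbb{E}^W$. To tame $\Delta X$ and $\Delta Y$ I would fix $\eta \in (\alpha/\delta,\, 1/2)$, a non-empty interval precisely because $\alpha < \delta/2$, and introduce stopping times $\tau_N$ that keep the $\eta$-H\"older norms of $X$ and $Y$ on $[0,\tau_N]$ bounded by $N$. By the observation that precedes the statement of the theorem, the paths of $X$ and $Y$ are $\eta$-H\"older continuous for every $\eta < 1/2$, so $\tau_N \nearrow T$ a.s., and it suffices to prove $X_{t\wedge\tau_N} = Y_{t\wedge\tau_N}$ for each fixed $N$. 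On $\{s \le \tau_N\}$, Lemma \ref{LemmaH} gives $\Delta X(s) + \Delta Y(s) \le C_N$ with $C_N$ deterministic.

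Next, I would apply the three estimates (\ref{Fbfh}), (\ref{GHfh}) and (\ref{GW2}) to the stopped difference, square the pathwise bounds, and insert a weighted Cauchy--Schwarz inequality in the form
\[
\Bigl(\int_0^t K(t,s)\,a(s)\,ds\Bigr)^2 \le \Bigl(\int_0^t K(t,s)^2 (t-s)^{-\gamma}\,ds\Bigr) \Bigl(\int_0^t (t-s)^{\gamma}\,a(s)^2\,ds\Bigr),
\]
with $\gamma$ chosen so both integrals converge. Taking $\mathbb{E}^W$ and using $|Z_s|^2 \le \|Z_s\|_\alpha^2$ to convert the It\^o contribution from (\ref{GW2}) into one in terms of $\|Z_s\|_\alpha^2$ yields
\[
\psi_N(t) \le C(N,\omega) \int_0^t k(t,s)\,\psi_N(s)\,ds, \qquad \psi_N(t) := \mathbb{E}^W\bigl[\|Z_{t \wedge \tau_N}\|_\alpha^2\bigr],
\]
with a singular but integrable kernel $k(t,s)$. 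A standard Gronwall lemma for Volterra inequalities with weakly singular kernel then forces $\psi_N \equiv 0$ on $[0,T]$, and sending $N \to \infty$ delivers pathwise uniqueness.

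The main obstacle is handling the fractional piece: the bound (\ref{GHfh}) already contains the factor $(t-s)^{-2\alpha}$, whose square $(t-s)^{-4\alpha}$ is integrable only when $\alpha < 1/4$. The remedy is precisely the weighted Cauchy--Schwarz step above with $\gamma \in (4\alpha-1,\,1)$, a non-empty interval exactly when $\alpha < 1/2$. This is the step that consumes the full range of $\alpha$ allowed by the theorem; the conditions $\alpha < \beta$ and $\alpha < \delta/2$ enter earlier, through the applicability of (\ref{GHfh}) and through Lemma \ref{LemmaH} respectively.
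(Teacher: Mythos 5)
Your overall strategy coincides with the paper's: localize on the event that the $\eta$-H\"older norms of $X$ and $Y$ are bounded by $N$ (with $\eta$ chosen so that Lemma \ref{LemmaH} applies, which is where $\alpha<\delta/2$ enters), apply the three estimates (\ref{Fbfh}), (\ref{GW2}), (\ref{GHfh}), pass to a Volterra inequality with a weakly singular kernel, and conclude with the Gronwall-type lemma. Two points of divergence deserve comment. First, your weighted Cauchy--Schwarz step is both miscalibrated and unnecessary: with the weights as you wrote them the first factor is $\int_0^t(t-s)^{-4\alpha-\gamma}\,ds$, which converges only for $\gamma<1-4\alpha$, so the admissible range is $\gamma\in(-1,1-4\alpha)$, not $(4\alpha-1,1)$. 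More to the point, the squared kernel never needs to appear: since $\int_0^t\bigl((t-s)^{-2\alpha}+s^{-\alpha}\bigr)\mathrm{d}s<\infty$ for $\alpha<1/2$, the elementary bound $\bigl(\int_0^t K(t,s)a(s)\,\mathrm{d}s\bigr)^2\le\bigl(\int_0^t K(t,s)\,\mathrm{d}s\bigr)\bigl(\int_0^t K(t,s)a(s)^2\,\mathrm{d}s\bigr)$ produces the kernel $\varphi(s,t)=(t-s)^{-1/2-\alpha}+s^{-\alpha}$ directly, which is what the paper uses.

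Second, and more substantively, you localize with stopping times $\tau_N$ where the paper uses the non-adapted global events $\Omega_N=\{\|X\|_\eta\le N,\ \|Y\|_\eta\le N\}$. This is not a cosmetic choice: the whole delicacy of the paper's proof lives exactly here. With your $\tau_N$, the pathwise estimates for the drift and fBm terms are evaluated at the random time $t\wedge\tau_N$, so the singular kernel becomes $(t\wedge\tau_N-s)^{-2\alpha}$ over the random interval $[0,t\wedge\tau_N]$; on $\{\tau_N<t\}$ this kernel \emph{exceeds} $(t-s)^{-2\alpha}$, and you cannot simply dominate it to land on a deterministic Volterra inequality for $\psi_N(t)=\mathbb{E}^W[\|Z_{t\wedge\tau_N}\|_\alpha^2]$. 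The paper avoids random times entirely: it keeps the deterministic kernel, multiplies by $\mathbf{1}_{\Omega_N}$, and pays for the non-adaptedness of $\Omega_N$ only in the It\^o term, where the indicator cannot be pulled inside the isometry; this forces the extra summand $\mathbb{E}^W[\|X_s-Y_s\|_\alpha^2\mathbf{1}_{\Omega\setminus\Omega_N}]$, which is then absorbed by an auxiliary quantity $V_N(t)=\int_0^t\varphi(s,t)\mathbb{E}^W[\|X_s-Y_s\|_\alpha^2\mathbf{1}_{\Omega_N}]\,\mathrm{d}s$ and a limiting argument in $N$ before Gronwall is applied. Your sketch asserts the clean inequality $\psi_N(t)\le C\int_0^t k(t,s)\psi_N(s)\,\mathrm{d}s$ without confronting either of these obstructions, so as written the key step is a gap rather than a proof; the rest of the argument (Lemma \ref{LemmaH}, treating $\Lambda_\alpha(B^H)$ as a constant under $\mathbb{E}^W$, the singular Gronwall lemma, letting $N\to\infty$) matches the paper.
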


\begin{proof}
Let $X$ and $Y$ be two weak solutions of (\ref{sde1}) defined on the same
probability space, adapted to the same filtration and with the same initial
value. Then the trajectories of $X$ and $Y$ are $\eta$-H\"{o}lder continuous,
for all $\eta<1/2$. Choose $\eta$ such that $\alpha<\eta<1/2$. Consider the
sets $\Omega_{N}\subset\Omega$, defined by
\[
\Omega_{N}:=\left\{  \omega\in\Omega:\text{ }\left\Vert X\right\Vert _{\eta
}\leq N\text{ and }\left\Vert Y\right\Vert _{\eta}\leq N\text{ }\right\}  ,
\]
with $N\in\mathbb{N}$. It is clear that $\Omega_{N}\nearrow\Omega$. From
(\ref{sde1}) we have that the difference between the two solutions satisfies
\begin{align}
&  \mathbb{E}^{W}\left[  \left\Vert X_{t}-Y_{t}\right\Vert _{\alpha}%
^{2}\mathbf{1}_{\Omega_{N}}\right] \nonumber\\
&  \leq4\mathbb{E}^{W}\left\Vert \left(  F_{t}^{b}(X)-F_{t}^{b}(Y)\right)
\mathbf{1}_{\Omega_{N}}\right\Vert _{\alpha}^{2}+4\mathbb{E}^{W}\left\Vert
\left(  G_{t}^{\sigma W}(X)-G_{t}^{\sigma W}(Y)\right)  \right\Vert _{\alpha
}^{2}\nonumber\\
&  +4\mathbb{E}^{W}\left\Vert \left(  G_{t}^{\sigma H}(X)-G_{t}^{\sigma
H}(Y)\right)  \mathbf{1}_{\Omega_{N}}\right\Vert _{\alpha}^{2}.
\label{Eq difference solutions}%
\end{align}
We split the set $\Omega$ into $\Omega_{N}$ and $\Omega\backslash\Omega_{N}$
in the second summand of (\ref{Eq difference solutions}) and use the estimates
(\ref{Fbfh}), (\ref{GHfh}), (\ref{GW2}) in order to obtain%
\begin{align}
&  \mathbb{E}^{W}\left[  \left\Vert X_{t}-Y_{t}\right\Vert _{\alpha}%
^{2}\mathbf{1}_{\Omega_{N}}\right] \nonumber\\
&  \leq C\int_{0}^{t}\varphi(s,t)\mathbb{E}^{W}\left[  \left\Vert X_{s}%
-Y_{s}\right\Vert _{\alpha}^{2}\mathbf{1}_{\Omega_{N}}\right]  \mathrm{d}%
s\nonumber\\
&  +C\int_{0}^{t}\varphi(s,t)\left(  \mathbb{E}^{W}\left[  \left\Vert
X_{s}-Y_{s}\right\Vert _{\alpha}^{2}\mathbf{1}_{\Omega_{N}}\right]
+\mathbb{E}^{W}\left[  \left\Vert X_{s}-Y_{s}\right\Vert _{\alpha}%
^{2}\mathbf{1}_{\Omega\backslash\Omega_{N}}\right]  \right)  \mathrm{d}%
s\nonumber\\
&  +C\left(  \Lambda_{\alpha}(B^{H})\right)  ^{2}\int_{0}^{t}\varphi
(s,t)\mathbb{E}^{W}\left[  \left(  1+(\Delta X_{s})^{2}+(\Delta Y_{s}%
)^{2}\right)  \left\Vert X_{s}-Y_{s}\right\Vert _{\alpha}^{2}\mathbf{1}%
_{\Omega_{N}}\right]  \mathrm{d}s, \label{Eq uniqueness estimate}%
\end{align}
where%
\[
\varphi(s,t)=(t-s)^{-\frac{1}{2}-\alpha}+s^{-\alpha}\text{. }%
\]
If $\omega\in\Omega_{N}$ then, by Lemma \ref{LemmaH}, we have that
\begin{equation}
1+(\Delta X_{s})^{2}+(\Delta Y_{s})^{2}\leq C_{N}\text{.} \label{z2}%
\end{equation}
Set%
\[
V_{N}(t)=\int_{0}^{t}\varphi(s,t)\mathbb{E}^{W}\left[  \left\Vert X_{s}%
-Y_{s}\right\Vert _{\alpha}^{2}\mathbf{1}_{\Omega_{N}}\right]  \mathrm{d}s.
\]
Multiplying Equation (\ref{Eq uniqueness estimate}) by $\varphi(s,t)$ and
integrating, yields%
\begin{align}
V_{N}(t)  &  \leq C_{N}\left[  \left(  \Lambda_{\alpha}(B^{H})\right)
^{2}+1\right]  \int_{0}^{t}\varphi(s,t)V_{N}(s)\mathrm{d}s\label{z3b}\\
&  +C\int_{0}^{t}\varphi(s,t)\int_{0}^{s}\varphi(r,s)\mathbb{E}^{W}\left[
\left\Vert X_{r}-Y_{r}\right\Vert _{\alpha}^{2}\mathbf{1}_{\Omega
\backslash\Omega_{N}}\right]  \mathrm{d}r\mathrm{d}s.\nonumber
\end{align}
By the bounded convergence theorem, we have that almost surely%
\[
V_{N}(t)\ \rightarrow\int_{0}^{t}\varphi(s,t)\mathbb{E}^{W}\left[  \left\Vert
X_{s}-Y_{s}\right\Vert _{\alpha}^{2}\right]  \mathrm{d}s<\infty
\]
and%
\[
\int_{0}^{t}\varphi(s,t)\int_{0}^{s}\varphi(r,s)\mathbb{E}^{W}\left[
\left\Vert X_{r}-Y_{r}\right\Vert _{\alpha}^{2}\mathbf{1}_{\Omega
\backslash\Omega_{N}}\right]  \mathrm{d}r\mathrm{d}s\rightarrow0,
\]
as $N$ tends to infinity. Then, there exists a random variable $N^{\ast}%
\in\mathbb{N}$ such that
\begin{equation}
C\int_{0}^{t}\varphi(s,t)\int_{0}^{s}\varphi(r,s)\mathbb{E}^{W}\left[
\left\Vert X_{r}-Y_{r}\right\Vert _{\alpha}^{2}\mathbf{1}_{\Omega
\backslash\Omega_{N}}\right]  \mathrm{d}r\mathrm{d}s\leq\frac{1}{2}V_{N}(t),
\label{z4}%
\end{equation}
for all $N\geq N^{\ast}$. Substituting (\ref{z4}) into (\ref{z3b}) yields
\[
\ V_{N}(t)\leq C_{N}\left[  \left(  \Lambda_{\alpha}(B^{H})\right)
^{2}+1\right]  \int_{0}^{t}\varphi(s,t)V_{N}(s)\mathrm{d}s,
\]
for all $N\geq N^{\ast}$. Applying now the Gronwall-type Lemma 7.6 in
\cite{Nualart}, we deduce that $V_{N}(t)=0$ for all $N\geq N^{\ast}$ almost
surely. Hence, $\ $
\[
P\left[  X_{t}=Y_{t},\text{ \ }\forall t\in\left[  0,T\right]  \right]  =1,
\]
and the pathwise uniqueness property holds.
\end{proof}

\setcounter{equation}{0}

\section{Existence of solutions}

Let us now introduce the Euler approximations for Equation (\ref{sde1}).
Consider the framework $(\Omega,\mathcal{F},P)$, $\{\mathcal{F}_{t}%
,t\in\lbrack0,T]\}$, $(X_{0},B^{H},W)$ introduced in Section 2. Fix a sequence
of partitions
\[
0=t_{0}^{n}<t_{1}^{n}<\cdots<t_{i}^{n}<\cdots<t_{n}^{n}=T
\]
of $\left[  0,T\right]  $ such that
\[
\underset{0\leq i\leq n-1}{\sup}\left|  t_{i+1}^{n}-t_{i}^{n}\right|
\rightarrow0
\]
as $n\rightarrow\infty$. Define $X^{0}(t)=X_{0}$ and for $n\geq1,$%

\begin{align}
X^{n}(t)  &  =X_{0}+\int_{0}^{t}b(k_{n}(s),X^{n}(k_{n}(s)))\mathrm{d}%
s+\int_{0}^{t}\sigma_{W}(k_{n}(s),X^{n}(k_{n}(s)))\mathrm{d}W_{s}\nonumber\\
&  +\int_{0}^{t}\sigma_{H}(k_{n}(s),X^{n}(k_{n}(s)))\mathrm{d}B_{s}^{H},
\label{eqEuler}%
\end{align}
where
\[
k_{n}(t):=t_{i}^{n},
\]
if $t\in\lbrack t_{i}^{n},t_{i+1}^{n})$. We will show the following result.

\begin{proposition}
\label{prop1} For any integer $N\geq1$ there exists a random variable
$R_{N}>0$, depending on $X_{0}$ and $B^{H}$, such that, almost surely,
\begin{equation}
\mathbb{E}^{W}\left[  \left|  X_{t}^{n}-X_{s}^{n}\right|  ^{2N}\right]  \leq
R_{N}\ \left|  t-s\right|  ^{N}, \label{z7}%
\end{equation}
for all $s,t\in\lbrack0,T]$ and $n\in\mathbb{N}$.
\end{proposition}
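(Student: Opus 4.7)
The plan is to work conditionally on $\widehat{\mathcal{F}}_0 = \sigma(X_0, B^H)$, so that $B^H$ and consequently $\Lambda_\alpha(B^H)$ and any H\"older seminorm of $B^H$ are treated as a.s.\ finite random constants, while $W$ remains random under $\mathbb{E}^W$. For $s<t$ I would decompose
\[
X^n(t)-X^n(s) = \int_s^t b(k_n(u),X^n(k_n(u)))\mathrm{d}u + \int_s^t \sigma_W(k_n(u),X^n(k_n(u)))\mathrm{d}W_u + \int_s^t \sigma_H(k_n(u),X^n(k_n(u)))\mathrm{d}B^H_u,
\]
apply the elementary inequality $(a+b+c)^{2N}\le 3^{2N-1}(a^{2N}+b^{2N}+c^{2N})$, and estimate the three contributions after first securing a uniform a priori bound on moments of $X^n$.

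Stage 1 (a priori moment bound). I would establish an a.s.\ finite random variable $K_N$, depending only on $X_0$ and $B^H$, such that
\[
\sup_n \sup_{u\in[0,T]}\mathbb{E}^W\!\left[\|X^n(u)\|_\alpha^{2N}\right] \le K_N.
\]
For the drift piece I use Proposition \ref{propFbf} together with H\"older's inequality and (H$b$); for the It\^o piece I raise (\ref{GsigmaWf2}) to the $N$-th power via the Burkholder-Davis-Gundy inequality and use (H$\sigma_W$); for the fractional piece I use the pathwise estimate for $G^{\sigma_H}$ of Section 3 combined with the a.s.\ finiteness of $\Lambda_\alpha(B^H)$. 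The Euler discretization is absorbed by comparing $|X^n(k_n(u))|$ to $\|X^n(u)\|_\alpha$ plus a step-size term, and the loop is closed with a fractional Gronwall lemma such as Lemma 7.6 in \cite{Nualart}.

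Stage 2 (increment estimate). The drift increment yields $\mathbb{E}^W|\cdot|^{2N}\le C(t-s)^{2N}(1+K_N)$ via H\"older. The It\^o increment yields $\mathbb{E}^W|\cdot|^{2N}\le C(t-s)^{N}(1+K_N)$ via Burkholder-Davis-Gundy. The fractional increment is pathwise: applying (\ref{Gfa1}) on the interval $[s,t]$ one obtains a bound in terms of $\Lambda_\alpha(B^H;[s,t])$ times the $\|\cdot\|_{\alpha,1,[s,t]}$-norm of $\sigma_H(k_n(\cdot),X^n(k_n(\cdot)))$; since $B^H$ is H\"older of every order less than $H>1/2$ and the $\alpha$-norm of $X^n$ is controlled by $K_N$ from Stage 1, this contributes a power of $(t-s)$ strictly larger than $N$. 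Combining the three contributions yields the claim with $R_N$ an $\widehat{\mathcal{F}}_0$-measurable random variable.

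The main obstacle is Stage 1. Because the pathwise $B^H$-estimate forces one to work with the full $\alpha$-norm of $X^n$ rather than merely $|X^n|$, Gronwall has to be closed in a weighted/fractional setting; moreover, the Euler discretization $k_n(\cdot)$ --- whose jumps could in principle inflate H\"older-type quantities of the integrands --- must be shown to contribute bounds that are uniform in $n$. Once the a priori bound is in hand, Stage 2 is a routine increment calculation, with $H>1/2$ giving the fractional piece an extra margin beyond the $(t-s)^N$ required by the It\^o piece.
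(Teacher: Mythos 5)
Your proposal follows essentially the same route as the paper: a uniform-in-$n$ a priori bound on $\mathbb{E}^W[\|X^n(t)\|_\alpha^{2N}]$ obtained from the Section 3 integral estimates (H\"older/Burkholder for the It\^o part, the pathwise $\Lambda_\alpha(B^H)$ bound for the fractional part) and closed with the fractional Gronwall lemma of \cite{Nualart}, followed by the increment estimate in which the drift gives $(t-s)^{2N}$, the It\^o term gives $(t-s)^N$ via Burkholder, and the $B^H$ term gives $(t-s)^{2N(1-\alpha)}$ via (\ref{Gfa1}). The only simplification you miss is that the discretization needs no separate ``step-size term'': since $k_n(u)\le u$, one bounds $\mathbb{E}^W[\|X^n(k_n(u))\|_\alpha^{2N}]$ by $\sup_{v\le u}\mathbb{E}^W[\|X^n(v)\|_\alpha^{2N}]$ and the Gronwall loop closes directly.
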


\begin{proof}
The proof will be done in two steps.

\textit{Step 1.-} We begin by proving that there is a random variable
$K_{N}>0$ such that
\begin{equation}
\mathbb{E}^{W}\left[  \left\|  X_{t}^{n}\right\|  _{\alpha}^{2N}\right]  \leq
K_{N}, \label{z6}%
\end{equation}
for all $t\in\left[  0,T\right]  $ and for all $N\in\mathbb{N}$.

Note that the paths of $X^{n}(k_{n}(\cdot))$ are piecewise constant and the
integrals in (\ref{eqEuler}) are just finite sums. In the following
computations, $C_{N}\ $denotes a positive constant that depends on $N$ and the
other parameters of the problem, and may vary from line to line. From
(\ref{eqEuler}), we have that
\begin{align*}
&  \mathbb{E}^{W}\left[  \left\|  X_{t}^{n}\right\|  _{\alpha}^{2N}\right]
\leq C_{N}\left\{  \left|  X_{0}\right|  ^{2N}+\mathbb{E}^{W}\left[  \left\|
\int_{0}^{t}b(k_{n}(s),X^{n}(k_{n}(s)))\mathrm{d}s\right\|  _{\alpha}%
^{2N}\right]  \right. \\
&  +\mathbb{E}^{W}\left[  \left\|  \int_{0}^{t}\sigma_{W}(k_{n}(s),X^{n}%
(k_{n}(s)))\mathrm{d}W_{s}\right\|  _{\alpha}^{2N}\right] \\
&  \left.  +\mathbb{E}^{W}\left[  \left\|  \int_{0}^{t}\sigma_{H}%
(k_{n}(s),X^{n}(k_{n}(s)))\mathrm{d}B_{s}^{H}\right\|  _{\alpha}^{2N}\right]
\right\} \\
&  =C_{N}\left(  \left|  X_{0}\right|  ^{2N}+A_{1}+A_{2}+A_{3}\right)  .
\end{align*}
Using the estimate (\ref{Fbf}) and H\"{o}lder's inequality, we obtain
\begin{align*}
&  A_{1}\leq C_{N}\mathbb{E}^{W}\left[  \left(  \int_{0}^{t}\frac{\left|
X^{n}(k_{n}(s))\right|  }{(t-s)^{\alpha}}\mathrm{d}s+1\right)  ^{2N}\right] \\
&  \leq C_{N}\mathbb{E}^{W}\left[  \left(  \int_{0}^{t}\left|  X^{n}%
(k_{n}(s))\right|  ^{2}\mathrm{d}s\right)  ^{N}\right]  +C_{N}\\
&  \leq C_{N}\mathbb{E}^{W}\left[  \int_{0}^{t}\left|  X^{n}(k_{n}(s))\right|
^{2N}\mathrm{d}s\right]  +C_{N}.
\end{align*}
We have also that
\begin{align*}
&  A_{2}\leq C_{N}\mathbb{E}^{W}\left[  \left|  \int_{0}^{t}\sigma_{W}%
(k_{n}(s),X^{n}(k_{n}(s)))\mathrm{d}W_{s}\right|  ^{2N}\right] \\
&  +C_{N}\mathbb{E}^{W}\left[  \left(  \int_{0}^{t}\frac{\left|  \int_{s}%
^{t}\sigma_{W}(k_{n}(r),X^{n}(k_{n}(r)))\mathrm{d}W_{r}\right|  }{\left(
t-s\right)  ^{\alpha+1}}\mathrm{d}s\right)  ^{2N}\right] \\
&  =A_{11}+A_{12}.
\end{align*}
Applying Burkh\"{o}lder and H\"{o}lder inequalities, we have that
\begin{align*}
&  A_{11}\leq C_{N}\mathbb{E}^{W}\left[  \left|  \int_{0}^{t}\left|
\sigma_{W}(k_{n}(s),X^{n}(k_{n}(s)))\right|  ^{2N}\mathrm{d}s\right|  \right]
\\
&  \leq C_{N}\int_{0}^{t}\left(  1+\mathbb{E}^{W}\left[  \left|  X^{n}%
(k_{n}(s))\right|  ^{2N}\right]  \right)  \mathrm{d}s,
\end{align*}
where we have used the linear growth assumption in $\ $(H$\sigma_{W}$). For
the second term we have, by H\"{o}lder and Burkh\"{o}lder inequalities, that
\begin{align*}
A_{12}  &  \leq C_{N}\mathbb{E}^{W}\left[  \left(  \int_{0}^{t}\frac
{1}{\left(  t-s\right)  ^{\frac{2N}{2N-1}\left(  \alpha+\frac{1}{2}%
-\frac{1/2+\alpha}{2N}\right)  }}\mathrm{d}s\right)  ^{2N-1}\right. \\
&  \left.  \times\left(  \int_{0}^{t}\frac{\left|  \int_{s}^{t}\sigma
_{W}(k_{n}(r),X^{n}(k_{n}(r))\mathrm{d}W_{r}\right|  ^{2N}}{\left(
t-s\right)  ^{N+\frac{1}{2}+\alpha}}\mathrm{d}s\right)  \right] \\
&  \leq C_{N}\int_{0}^{t}(t-s)^{-\frac{3}{2}-\alpha}\mathbb{E}^{W}\left[
\int_{s}^{t}\left|  \sigma_{W}(k_{n}(r),X^{n}(k_{n}(r)))\right|
^{2N}\mathrm{d}r\right]  \mathrm{d}s.
\end{align*}
Applying now Fubini's theorem and using the growth assumption in (H$\sigma
_{W}$), we obtain
\[
A_{12}\leq C_{N}\int_{0}^{t}\left(  t-r\right)  ^{-\frac{1}{2}-\alpha}\left(
1+\mathbb{E}^{W}\left[  \left|  X^{n}(k_{n}(r))\right|  ^{2N}\right]  \right)
\mathrm{d}r.
\]
Therefore,
\[
A_{2}\leq C_{N}\int_{0}^{t}\left(  t-s\right)  ^{-\frac{1}{2}-\alpha
}\mathbb{E}^{W}\left[  \left|  X^{n}(k_{n}(s))\right|  ^{2N}\right]
\mathrm{d}s+C_{N}.
\]
Applying (\ref{GsigmaHf2}), we have that
\[
A_{3}\leq C_{N}\Lambda_{\alpha}(B^{H})^{2N}\ \mathbb{E}^{W}\left[  \left(
\int_{0}^{t}\left(  (t-s)^{-2\alpha}+s^{-\alpha}\right)  \left\|  \sigma
_{H}(k_{n}(s),X^{n}(k_{n}(s)))\right\|  _{\alpha}\mathrm{d}s\right)
^{2N}\right]  .
\]
By H\"{o}lder's inequality and the assumptions in (H$\sigma_{H}$), we have
\[
A_{3}\leq C_{N}\Lambda_{\alpha}(B^{H})^{2N}\int_{0}^{t}\left(  (t-s)^{-2\alpha
}+s^{-\alpha}\right)  \left[  1+\mathbb{E}^{W}\left[  \left\|  X^{n}%
(k_{n}(s))\right\|  _{\alpha}^{2N}\right]  \right]  \mathrm{d}r.
\]
Putting together all the estimates obtained for $A_{1},A_{2}$ and $A_{3},$ we
obtain
\begin{align}
&  \mathbb{E}^{W}\left[  \left\|  X_{t}^{n}\right\|  _{\alpha}^{2N}\right]
\leq C_{N}\left|  X_{0}\right|  ^{2N}+C_{N}\left[  \Lambda_{\alpha}%
(B^{H})^{2N}+1\right] \nonumber\\
&  \quad\times\int_{0}^{t}\left(  \left(  t-s\right)  ^{-\frac{1}{2}-\alpha
}+s^{-\alpha}\right)  \mathbb{E}^{W}\left[  \left\|  X^{n}(k_{n}(s))\right\|
_{\alpha}^{2N}\right]  \mathrm{d}s. \label{z5}%
\end{align}
Therefore, since the right-hand side of Equation (\ref{z5}) is an increasing
function of $t$, we have
\begin{align*}
&  \underset{0\leq s\leq t}{\sup}\mathbb{E}^{W}\left[  \left\|  X_{s}%
^{n}\right\|  _{\alpha}^{2N}\right]  \leq C_{N}\left|  X_{0}\right|
^{2N}+C_{N}\left[  \Lambda_{\alpha}(B^{H})^{2N}+1\right] \\
&  \quad\times\int_{0}^{t}\left(  \left(  t-s\right)  ^{-\frac{1}{2}-\alpha
}+s^{-\alpha}\right)  \left(  \underset{0\leq u\leq s}{\sup}\mathbb{E}%
^{W}\left[  \left\|  X_{u}^{n}\right\|  _{\alpha}^{2N}\right]  \right)
\mathrm{d}s.
\end{align*}
As a consequence, by the Gronwall-type lemma (Lemma 7.6 in \cite{Nualart}), we
deduce the desired estimate.

\textit{Step 2.-} Now we show that there is a random variable $R_{N}$ such
that (\ref{z7}) holds. In the sequel, $R_{N}$ denotes a positive random
variable. We have
\begin{align*}
&  \mathbb{E}^{W}\left[  \left|  X_{t}^{n}-X_{s}^{n}\right|  ^{2N}\right]
\leq C_{N}\left\{  \mathbb{E}^{W}\left[  \left|  \int_{s}^{t}b(k_{n}%
(u),X^{n}(k_{n}(u)))\mathrm{d}u\right|  ^{2N}\right]  \right. \\
&  +\mathbb{E}^{W}\left[  \left|  \int_{s}^{t}\sigma_{W}(k_{n}(u),X^{n}%
(k_{n}(u)))\mathrm{d}W_{u}\right|  ^{2N}\right] \\
&  \left.  +\mathbb{E}^{W}\left[  \left|  \int_{s}^{t}\sigma_{H}%
(k_{n}(u),X^{n}(k_{n}(u)))\mathrm{d}B_{u}^{H}\right|  ^{2N}\right]  \right\}
\\
&  =B_{1}+B_{2}+B_{3}.
\end{align*}
Applying H\"{o}lder's inequality, the growth assumption in (H$b$) and
(\ref{z6}), we have that%
\begin{align*}
&  B_{1}\leq C_{N}(t-s)^{2N-1}\int_{s}^{t}\mathbb{E}^{W}\left[  \left|
b(k_{n}(u),X^{n}(k_{n}(u))\right|  ^{2N}\right]  \mathrm{d}u\\
&  \leq R_{N}(t-s)^{2N}.
\end{align*}
By the H\"{o}lder and Burkh\"{o}lder inequalities and using (\ref{z6}), we
obtain
\[
B_{2}\leq C_{N}(t-s)^{N-1}\mathbb{E}^{W}\left[  \int_{s}^{t}\left|  \sigma
_{W}(k_{n}(u),X^{n}(k_{n}(u))\right|  ^{2N}\mathrm{d}u\right]  \leq
R_{N}(t-s)^{N}.
\]
Finally, using the estimate (\ref{Gfa1}) and the H\"{o}lder inequality, we
have
\[
\left|  \int_{s}^{t}f(u)\mathrm{d}B_{u}^{H}\right|  ^{2N}\leq C_{N}%
\Lambda_{\alpha}(B^{H})^{2N}\left(  t-s\right)  ^{2N(1-\alpha)+2\alpha-1}%
\int_{s}^{t}\frac{\left\|  f(r)\right\|  _{\alpha}^{2N}}{\left(  r-s\right)
^{2\alpha}}\mathrm{d}r.
\]
Applying this estimate, the assumptions (H$\sigma_{H}$) and (\ref{z6}), we
obtain
\begin{align*}
&  B_{3}\leq\mathbb{E}^{W}\Lambda_{\alpha}(B^{H})^{2N}\left(  t-s\right)
^{2N(1-\alpha)+2\alpha-1}\int_{s}^{t}\frac{\left\|  \sigma_{H}(k_{n}%
(r),X^{n}(k_{n}(r)))\right\|  _{\alpha}^{2N}}{\left(  r-s\right)  ^{2\alpha}%
}\mathrm{d}r\\
&  \leq R_{N}\left(  t-s\right)  ^{2N(1-\alpha)+2\alpha-1}\mathbb{E}%
^{W}\left[  \int_{s}^{t}\frac{1+\left\|  X^{n}(k_{n}(r))\right\|  _{\alpha
}^{2N}}{\left(  r-s\right)  ^{2\alpha}}\mathrm{d}r\right] \\
&  \leq R_{N}(t-s)^{N},
\end{align*}
which concludes the proof.
\end{proof}

As a consequence of Proposition \ref{prop1}, we establish the tightness of the
law of the sequence $\left\{  X^{n}\right\}  _{n\in\mathbb{N}}$ in\ the space
$C_{0}^{\eta}$ of $\eta$-H\"{o}lder continuous functions, with $\eta<\frac
{1}{2}$, such that
\[
\underset{\varepsilon\rightarrow0}{\lim}\text{ }\underset{0<\left\vert
t-s\right\vert <\varepsilon}{\sup}\frac{\left\vert f(t)-f(s)\right\vert
}{(t-s)^{\eta}}=0.
\]
These spaces are complete and separable \cite{Hamadouche}.

\begin{proposition}
Let $P^{n}=P\circ X^{n}$, $n\geq0$, be the sequence of probability measures
induced by $X^{n}$ on $C_{0}^{\eta}$. Then this sequence is tight.
\end{proposition}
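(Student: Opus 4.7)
The plan is to upgrade the $L^{2N}$--moment bound of Proposition~\ref{prop1} into a quantitative H\"older regularity estimate via the Kolmogorov--Chentsov criterion, and then exploit the fact that bounded sets in a stronger H\"older space are relatively compact in the little-H\"older space $C_{0}^{\eta}$. Fix any $\eta<\tfrac{1}{2}$, pick an auxiliary exponent $\eta'\in(\eta,\tfrac{1}{2})$, and choose $N\in\mathbb{N}$ large enough that $(N-1)/(2N)>\eta'$. Then Proposition~\ref{prop1} gives, almost surely,
\[
\mathbb{E}^{W}\!\left[|X_{t}^{n}-X_{s}^{n}|^{2N}\right]\leq R_{N}\,|t-s|^{N}=R_{N}\,|t-s|^{1+(N-1)},
\]
so the Kolmogorov--Chentsov criterion is available \emph{conditionally} on $\widehat{\mathcal{F}}_{0}=\sigma(X_{0},B^{H})$, under the probability measure $P^{W}:=P(\cdot\mid\widehat{\mathcal{F}}_{0})$.

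Applying that conditional version, I obtain for each $n$ a measurable random variable $K_{n}$ such that
\[
|X_{t}^{n}-X_{s}^{n}|\leq K_{n}\,|t-s|^{\eta'}\qquad\text{for all }s,t\in[0,T],
\]
together with the bound $\mathbb{E}^{W}[K_{n}^{2N}]\leq C\,R_{N}$, where $C$ depends on $N,\eta',T$ but not on $n$. By Markov's inequality and the tower property,
\[
P(K_{n}>\lambda)=\mathbb{E}\!\left[P(K_{n}>\lambda\mid\widehat{\mathcal{F}}_{0})\right]\leq \mathbb{E}\!\left[\min\!\bigl(C\,R_{N}/\lambda^{2N},\,1\bigr)\right],
\]
and since $R_{N}<\infty$ a.s., dominated convergence yields $\sup_{n}P(K_{n}>\lambda)\to 0$ as $\lambda\to\infty$.

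To finish, I use the compact embedding $C^{\eta'}\hookrightarrow C_{0}^{\eta}$. For $M,\lambda>0$ the set
\[
\mathcal{K}_{M,\lambda}=\left\{f\in C([0,T];\mathbb{R}^{d}):|f(0)|\leq M,\ \sup_{0\leq s<t\leq T}\frac{|f(t)-f(s)|}{(t-s)^{\eta'}}\leq\lambda\right\}
\]
is equicontinuous and uniformly bounded, so relatively compact in $C([0,T])$; moreover every $f\in\mathcal{K}_{M,\lambda}$ satisfies $|f(t)-f(s)|/(t-s)^{\eta}\leq\lambda(t-s)^{\eta'-\eta}\to0$ uniformly, so $\mathcal{K}_{M,\lambda}$ is in fact relatively compact in $C_{0}^{\eta}$. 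Given $\varepsilon>0$, choose $M$ so that $P(|X_{0}|>M)<\varepsilon/2$ (possible since $X_{0}$ is a fixed random variable, hence tight on $\mathbb{R}^{d}$) and $\lambda$ with $\sup_{n}P(K_{n}>\lambda)<\varepsilon/2$. Since $X^{n}(0)=X_{0}$, this gives $\inf_{n}P^{n}(\mathcal{K}_{M,\lambda})\geq 1-\varepsilon$, proving tightness.

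The main subtlety to watch is that the random variable $R_{N}$ produced by Proposition~\ref{prop1} need not be integrable --- it involves both $|X_{0}|^{2N}$ and $\Lambda_{\alpha}(B^{H})^{2N}$, and no moment assumption is imposed on $X_{0}$. This is precisely why one cannot apply Kolmogorov--Chentsov to the unconditional expectation and must argue conditionally on $\widehat{\mathcal{F}}_{0}$ before integrating; the a.s.\ finiteness of $R_{N}$ is then enough, via dominated convergence, to make the truncation in $\lambda$ uniform in $n$. Everything else --- the compact embedding and the two-parameter truncation in $(M,\lambda)$ --- is standard.
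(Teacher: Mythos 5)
Your proof is correct, but it follows a genuinely different route from the paper's. The paper keeps the Hölder-space tightness criterion as a black box (it cites Lamperti) and deals with the non-integrability of the random constant $R_{N}$ by truncating the \emph{event}: it fixes $M$ with $P(R_{N}>M)\leq\varepsilon/2$, passes to the renormalized measure $Q(\cdot)=P(\,\cdot\cap\{R_{N}\leq M\})/P(R_{N}\leq M)$, under which the moment bound of Proposition~\ref{prop1} holds with the deterministic constant $MP(R_{N}\leq M)^{-1}$ (using that $R_{N}$ is $\widehat{\mathcal{F}}_{0}$-measurable), applies the criterion under $Q$, and transfers the resulting compact set back to $P$ at the cost of the extra $\varepsilon/2$. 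You instead condition on $\widehat{\mathcal{F}}_{0}$ itself, upgrade the conditional moment bound to a pathwise H\"older modulus $K_{n}$ via a quantitative Kolmogorov--Chentsov (Garsia--Rodemich--Rumsey type) estimate with $\mathbb{E}^{W}[K_{n}^{2N}]\leq CR_{N}$, make the tail of $K_{n}$ uniform in $n$ by Markov plus dominated convergence, and exhibit the compact sets explicitly through the embedding $C^{\eta'}\hookrightarrow C_{0}^{\eta}$ together with a separate truncation of $|X_{0}|$ (legitimate since $X^{n}(0)=X_{0}$ for all $n$). Both arguments rest on exactly the same moment estimate and the same choice of $N$ (your condition $(N-1)/(2N)>\eta'$ is the paper's $\tfrac12-\tfrac1{2N}>\eta$ with $\eta$ replaced by $\eta'$); the paper's version is shorter because the measure change lets it quote a ready-made criterion, while yours is more self-contained and makes the compact sets and the role of the a.s.\ finiteness of $R_{N}$ completely explicit. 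Your closing remark correctly identifies the one genuine subtlety --- that $R_{N}$ has no integrability and one must not apply the unconditional Kolmogorov criterion --- which is precisely the point the paper's $Q$-construction is designed to circumvent.
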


\begin{proof}
Fix $\varepsilon>0$ and $\eta<\frac{1}{2}$. It suffices to show that there
exists a compact set $K$ in $C_{0}^{\eta}$ such that $\sup_{n\geq0}P(X^{n}\in
K^{c})\leq\varepsilon$. Choose an integer $N$ such that $\frac{1}{2}-\frac
{1}{2N}>\eta$. Let $M>0$ be such that%
\begin{equation}
P(R_{N}>M)\leq\frac{\varepsilon}{2}\text{. } \label{xx1}%
\end{equation}
Define a new probability by%
\[
Q(B)=\frac{P(B\cap\left\{  R_{N}\leq M\right\}  )}{P(R_{N}\leq M)}.
\]
Then, Proposition \ref{prop1} implies that%
\[
\mathbb{E}_{Q}\left[  \left|  X_{t}^{n}-X_{s}^{n}\right|  ^{2N}\right]
=\frac{\mathbb{E}\left[  \left|  X_{t}^{n}-X_{s}^{n}\right|  ^{2N}%
\mathbf{1}_{\left\{  R_{N}\leq M\right\}  }\right]  }{P(R_{N}\leq M)}\leq
MP(R_{N}\leq M)^{-1}\ \left|  t-s\right|  ^{N}.
\]
By the tightness criterion established in \cite{Lamperti}, the sequence \ of
probabilities $Q\circ X_{n}^{-1}$, $n\geq0$, is tight in $C_{0}^{\eta}$.
Therefore, there exists a compact subset $K$ in $C_{0}^{\eta}$ such that%
\begin{equation}
\sup_{n\geq0}Q(X^{n}\in K^{c})\leq P(R_{N}\leq M)^{-1}\frac{\varepsilon}{2}.
\label{xx2}%
\end{equation}
Finally, from (\ref{xx1}) and (\ref{xx2}) we obtain%
\[
P(X^{n}\in K^{c})\leq P(X^{n}\in K^{c},R_{N}\leq M)+P(R_{N}>M)\leq
\varepsilon,
\]
which allows us to conclude the proof.
\end{proof}

Now we can show the existence of a weak solution for Equation (\ref{sde1}).

\begin{theorem}
Assume that the coefficients $b,$ $\sigma_{W}$ and $\sigma_{H}$ satisfy the
assumptions (H$b$), (H$\sigma_{W}$) and (H$\sigma_{H}$). If $1-H<\alpha
<\min\left\{  \frac{1}{2},\beta,\frac{\delta}{2}\right\}  $, then there exists
a unique weak solution\ $X$ of Equation (\ref{sde1}).
\end{theorem}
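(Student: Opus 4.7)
My plan is to obtain a weak solution as a limit point of the Euler approximations $X^n$ from (\ref{eqEuler}) via tightness and the Skorokhod representation theorem, and then to derive uniqueness in law by combining the pathwise uniqueness established in Theorem \ref{Uniqueness} with the Yamada--Watanabe principle.

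First, I would fix $\eta$ with $\alpha<\eta<1/2$ and $\eta\delta>\alpha$ (permitted because $\alpha<\delta/2$), and consider the joint laws $Q_n$ of $(X^n,B^H,W)$ on $C_0^{\eta}\times C_0^{\eta}\times C_0^{\eta}$. The preceding proposition gives tightness of $\{P\circ (X^n)^{-1}\}$, while the laws of $B^H$ and $W$ are fixed, so $\{Q_n\}$ is tight. Extracting a weakly convergent subsequence (still indexed by $n$), the Skorokhod representation theorem provides, on an auxiliary probability space $(\widetilde\Omega,\widetilde{\mathcal F},\widetilde P)$, copies $(\widetilde X^n,\widetilde B^{H,n},\widetilde W^n)$ of $(X^n,B^H,W)$ that converge almost surely in $C_0^{\eta}\times C_0^{\eta}\times C_0^{\eta}$ to a triple $(\widetilde X,\widetilde B^H,\widetilde W)$. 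Standard arguments show that $\widetilde B^H$ is a fractional Brownian motion with parameter $H$, $\widetilde W$ is a standard Brownian motion, and the two are independent. Taking $\widetilde{\mathcal G}_t$ to be the right-continuous augmentation of $\sigma(\widetilde X_s,\widetilde B^H_s,\widetilde W_s:s\le t)$ augmented by the null sets, one verifies (using that each $\widetilde W^n$ is a Brownian motion independent of $\widetilde X^n_0$ and $\widetilde B^{H,n}$) that $\widetilde W$ is a $\widetilde{\mathcal G}_t$-Brownian motion and that $\widetilde B^H$ is $\widetilde{\mathcal G}_0$-measurable.

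Next I would pass to the limit in the Euler equation
\[
\widetilde X^n_t = \widetilde X_0 + \int_0^t b(k_n(s),\widetilde X^n(k_n(s)))\,\mathrm ds + \int_0^t \sigma_W(k_n(s),\widetilde X^n(k_n(s)))\,\mathrm d\widetilde W^n_s + \int_0^t \sigma_H(k_n(s),\widetilde X^n(k_n(s)))\,\mathrm d\widetilde B^{H,n}_s .
\]
For the Lebesgue integral, the a.s. uniform convergence of $\widetilde X^n$ and continuity of $b$ together with dominated convergence (using the uniform moment bounds from Step 1 of Proposition \ref{prop1}) yield convergence to $\int_0^t b(s,\widetilde X_s)\,\mathrm ds$. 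For the pathwise integral against $\widetilde B^{H,n}$, I would use that the Z\"ahle integral is, via the representation (\ref{DefRS}) and the bound (\ref{Gfa1}), a continuous functional of the pair (integrand, integrator) in the topologies of $W_0^{\alpha,1}$ and $W_T^{1-\alpha,\infty}$; the a.s.\ convergence of $\widetilde X^n$ to $\widetilde X$ in $C_0^{\eta}\subset W_0^{\alpha,\infty}$ and of $\widetilde B^{H,n}$ to $\widetilde B^H$ in $C_0^\eta\subset W_T^{1-\alpha,\infty}$, combined with the H\"older assumptions on $\sigma_H$, lets us identify the limit as $\int_0^t \sigma_H(s,\widetilde X_s)\,\mathrm d\widetilde B^H_s$. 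For the It\^o integral, which is the main technical point, the Lipschitz property of $\sigma_W$, the uniform $L^{2N}$ bounds of Proposition \ref{prop1}, and the It\^o isometry applied to differences give convergence in $L^2(\widetilde P)$ to $\int_0^t \sigma_W(s,\widetilde X_s)\,\mathrm d\widetilde W_s$, after verifying (through the identical-in-law construction) that each $\widetilde W^n$ is a Brownian motion with respect to the filtration generated by $(\widetilde X^n,\widetilde B^{H,n},\widetilde W^n)$, so that the approximation $\sigma_W(k_n(\cdot),\widetilde X^n(k_n(\cdot)))$ is adapted. The hard part is precisely this adaptedness/martingale identification for the rewritten It\^o integral on the Skorokhod space; I would handle it by showing the characteristic property $\widetilde E[(\widetilde W^n_t-\widetilde W^n_s)f(\widetilde X^n_u,\widetilde B^{H,n}_u,\widetilde W^n_u,u\le s)]=0$ for bounded continuous $f$ survives the limit $n\to\infty$.

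With existence of a weak solution $(\widetilde X,\widetilde B^H,\widetilde W)$ in hand, uniqueness is immediate: pathwise uniqueness was established in Theorem \ref{Uniqueness} under exactly the present hypotheses on $\alpha$, so the Yamada--Watanabe theorem (in its extension to SDEs driven by an independent ``random input'' such as $B^H$, treated as $\mathcal G_0$-measurable data) yields uniqueness in law of weak solutions and in fact the existence of a unique strong solution, which is the content of Theorem \ref{main}.
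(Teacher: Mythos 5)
Your proposal follows essentially the same route as the paper: Euler approximations, tightness in $C_0^{\eta}$, the Skorokhod representation theorem, passage to the limit in the three integrals (with the It\^o-integral limit handled by the martingale-characterization argument that the paper delegates to Skorokhod and Gy\"ongy--Krylov, and the fractional integral handled by continuity estimates of the type (\ref{GHfh}) and (\ref{Gfa1})), and finally Yamada--Watanabe for uniqueness. The only cosmetic difference is that the paper writes out the $B^H$-integral convergence in detail while citing references for the It\^o part, whereas you sketch both; the underlying ideas coincide.
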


\begin{proof}
The proof will be done in several steps.

\textit{Step 1.- }By the Prohorov theorem, the sequence $\{P^{n},n\geq0\}$ is
weakly relatively compact in $C_{0}^{\eta}$ and exists a subsequence,\ that we
still denote by $P^{n},$ which is weakly convergent to some probability
$P^{\infty}$. By the Skorokhod representation theorem, there exists a sequence
of processes $\left\{  Y^{n},B^{n},W^{n},0\leq n\leq\infty\right\}  ,$ defined
on some probability space $(\Omega,\mathcal{F},P)$ and with values in
$C_{0}^{\eta}$, such that, for every $0\leq n\leq\infty$, the process $\left(
Y^{n},B^{n},W^{n}\right)  $ has law $P^{n}$ and
\[
\lim_{n\longrightarrow\infty}\left\Vert Y^{n}-Y^{\infty}\right\Vert _{\eta
}+\left\Vert B^{n}-B^{\infty}\right\Vert _{\eta}+\left\Vert W^{n}-W^{\infty
}\right\Vert _{\eta}=0
\]
almost surely.

Since, for every $n$, the process $(Y^{n},B^{n},W^{n})$ has the same law as
$(X^{n},B^{H},W)$, if we introduce the filtrations%
\begin{align*}
\mathcal{F}_{t}^{n}  &  =\sigma\left\{  Y^{n}(s),B^{n}(s),W^{n}(s),\text{
}s\leq t\right\}  ,\\
\mathcal{F}_{t}^{\infty}  &  =\sigma\left\{  Y^{\infty}(s),B^{\infty
}(s),W^{\infty}(s),\text{ }s\leq t\right\}  ,
\end{align*}
the process $W^{n}$ (resp. $W^{\infty}$) is an $\mathcal{F}_{t}^{n}$ (resp.
$\mathcal{F}_{t}^{\infty}$) $r$-dimensional standard Brownian motion.
Moreover, $B^{n}$ and $B^{\infty}$ are fractional Brownian motions.

\textit{Step 2.- }By an adaptation of a result in \cite{Skorokhod} (page 32)
or Lemma 3.1 in \cite{Gyongy}, for any continuous function $f(t,x)$ which
satisfies the linear growth property in the variable $x$, we have that%
\begin{align*}
\lim_{n\longrightarrow\infty}\int_{0}^{t}f(k_{n}(s),Y^{n}(k_{n}(s)))\mathrm{d}%
s  &  =\int_{0}^{t}f(s,Y^{\infty}(s))\mathrm{d}s,\\
\lim_{n\longrightarrow\infty}\int_{0}^{t}f(k_{n}(s),Y^{n}\left(
k_{n}(s)\right)  )\mathrm{d}W_{s}^{n}  &  =\int_{0}^{t}f(s,Y^{\infty
}(s))\mathrm{d}W_{s}^{\infty},
\end{align*}
as $n$ tends to infinity, in probability, and uniformly in $t\in\left[
0,T\right]  $. We have also a similar result for the convergence of integrals
with respect to fractional Brownian motions:%
\begin{equation}
\lim_{n\longrightarrow\infty}\int_{0}^{t}\sigma_{H}(k_{n}\left(  s\right)
,Y^{n}(k_{n}\left(  s\right)  ))\mathrm{d}B_{s}^{n}=\int_{0}^{t}\sigma
_{H}(s,Y^{\infty}(s))\mathrm{d}B_{s}^{\infty}, \label{x1}%
\end{equation}
as $n$ tends to infinity, uniformly in $t\in\left[  0,T\right]  $ and $P$-a.s.
Let us show the convergence (\ref{x1}). By the linearity of the generalized
Stieltjes integral, it is clear that%
\[
\left\vert \int_{0}^{t}\sigma_{H}(k_{n}\left(  s\right)  ,Y^{n}(k_{n}\left(
s\right)  ))\mathrm{d}B_{s}^{n}-\int_{0}^{t}\sigma_{H}(s,Y^{\infty
}(s))\mathrm{d}B_{s}^{\infty}\right\vert \leq A_{1}+A_{2}\text{,}%
\]
where%
\[
A_{1}=\left\vert \int_{0}^{t}\sigma_{H}(k_{n}\left(  s\right)  ,Y^{n}%
(k_{n}\left(  s\right)  ))\mathrm{d}(B_{s}^{n}-B_{s}^{\infty})\right\vert
\]
and%
\[
A_{2}=\left\vert \int_{0}^{t}\left[  \sigma_{H}(k_{n}\left(  s\right)
,Y^{n}(k_{n}\left(  s\right)  ))-\sigma_{H}(s,Y^{\infty}(s))\right]
\mathrm{d}B_{s}^{\infty}\right\vert .
\]
Using the estimate (\ref{GHfh}), we have that
\begin{align*}
&  \left\Vert \int_{0}^{t}\left[  \sigma_{H}(s,Y^{n}(s))-\sigma_{H}%
(s,Y^{\infty}(s))\right]  \mathrm{d}B_{s}^{\infty}\right\Vert _{\alpha}\leq\\
&  \leq C\Lambda_{\alpha}(B^{\infty})\\
&  \times\int_{0}^{t}\left(  \left(  t-s\right)  ^{-2\alpha}+s^{-\alpha
}\right)  \left[  \left(  1+\Delta Y^{n}(s)+\Delta Y^{\infty}(s)\right)
\left\Vert Y^{n}(s)-Y^{\infty}(s)\right\Vert _{\alpha}\right]  \mathrm{d}s\\
&  \leq C\Lambda_{\alpha}(B^{\infty})\left\Vert Y^{n}-Y^{\infty}\right\Vert
_{\eta}\left(  1+\left\Vert Y^{n}\right\Vert _{\eta}^{\delta}+\left\Vert
Y^{\infty}\right\Vert _{\eta}^{\delta}\right)  \rightarrow0,
\end{align*}
as $n$ tends to infinity,$P$-a.s. Using the estimate (\ref{Gfa1}) and the
assumptions in (H$\sigma_{H}$), we have \
\begin{align*}
&  \left\Vert \int_{0}^{\cdot}\left(  \sigma_{H}(k_{n}\left(  s\right)
,Y^{n}(k_{n}\left(  s\right)  ))-\sigma_{H}(s,Y^{n}(s)\right)  )\mathrm{d}%
B_{s}^{\infty}\right\Vert _{\infty}\\
&  \leq C\Lambda_{\alpha}(B^{\infty})\left\Vert \sigma_{H}(k_{n}\left(
s\right)  ,Y^{n}(k_{n}\left(  s\right)  ))-\sigma_{H}(s,Y^{n}(s))\right\Vert
_{\alpha,1}\\
&  \leq C\Lambda_{\alpha}(B^{\infty})\left\Vert \sigma_{H}(k_{n}\left(
\cdot\right)  ,Y^{n}(k_{n}\left(  \cdot\right)  ))-\sigma_{H}(\cdot
,Y^{n}(\cdot))\right\Vert _{\infty}^{\varepsilon}\left(  I_{1}+I_{2}%
+I_{3}\right)  ,
\end{align*}
where $\varepsilon>0$ is a small positive number that depends on $\alpha$ and
$\beta$ and we have
\[
I_{1}=\left\Vert \sigma_{H}(k_{n}\left(  \cdot\right)  ,Y^{n}(k_{n}\left(
\cdot\right)  ))-\sigma_{H}(\cdot,Y^{n}(\cdot))\right\Vert _{\infty
}^{1-\varepsilon},
\]%
\begin{align*}
I_{2}  &  =\int_{0}^{T}\int_{0}^{s}\frac{\left\vert \sigma_{H}(s,Y^{n}\left(
s\right)  )-\sigma_{H}(r,Y^{n}(r))\right\vert ^{1-\varepsilon}}{\left(
s-r\right)  ^{\alpha+1}}drds\\
&  \leq C_{1}+C_{2}\int_{0}^{T}\int_{0}^{s}\frac{\left\vert Y^{n}\left(
s\right)  -Y^{n}(r)\right\vert ^{1-\varepsilon}}{\left(  s-r\right)
^{\alpha+1}}drds\\
&  \leq C_{1}+C_{2}\left\Vert Y^{n}\right\Vert _{\eta}^{\left(  1-\varepsilon
\right)  \eta}\leq C,
\end{align*}
where we have used the H\"{o}lder continuity in time in assumption
(H$\sigma_{H}$), and
\begin{align*}
I_{3}  &  =\int_{0}^{T}\int_{0}^{s}\frac{\left\vert \sigma_{H}(k_{n}\left(
s\right)  ,Y^{n}\left(  k_{n}\left(  s\right)  \right)  )-\sigma_{H}%
(k_{n}\left(  r\right)  ,Y^{n}(k_{n}\left(  r\right)  ))\right\vert
^{1-\varepsilon}}{\left(  s-r\right)  ^{\alpha+1}}drds\\
&  \leq C\int_{0}^{T}\int_{0}^{s}\frac{\left\vert k_{n}\left(  s\right)
-k_{n}\left(  r\right)  \right\vert ^{\left(  1-\varepsilon\right)  \beta
}+\left\vert k_{n}\left(  s\right)  -k_{n}\left(  r\right)  \right\vert
^{\left(  1-\varepsilon\right)  \eta}}{\left(  s-r\right)  ^{\alpha+1}}drds
\end{align*}
We can compute this last integral, using the partition on the interval and
decomposing the integrals in finite sums. This integral is uniformly bounded
in $n$. Therefore
\[
\left\Vert \int_{0}^{\cdot}\left(  \sigma_{H}(k_{n}\left(  s\right)
,Y^{n}(k_{n}\left(  s\right)  ))-\sigma_{H}(s,Y^{n}(s)\right)  )\mathrm{d}%
B_{s}^{\infty}\right\Vert _{\infty}\rightarrow0,
\]
as $n$ tends to infinity, $P$-a.s. In order to show the convergence of the
term $A_{1}$, we use again the estimate (\ref{Gfa1}) and Lemmas 7.4 and 7.5 in
\cite{Nualart}. We obtain that
\begin{align*}
A_{1}  &  \leq\left\Vert \sigma_{H}(k_{n}\left(  s\right)  ,Y^{n}(k_{n}\left(
s\right)  ))\right\Vert _{\alpha,1}\Lambda_{\alpha}\left(  B_{s}^{n}%
-B_{s}^{\infty}\right) \\
&  \leq C\left\Vert B^{n}-B^{\infty}\right\Vert _{\infty}^{\varepsilon},
\end{align*}
where $\varepsilon>0$ is a small positive constant which depends on $\alpha$.
Therefore, $A_{1}$ converges to zero as $n$ tends to infinity, $P$-a.s.

\textit{Step 3.- }Recall from step 1 that $(Y^{n},B^{n},W^{n})$ and
$(X^{n},B^{H},W)$ have the same laws. Moreover, $W^{n}$ is a standard Brownian
motion in the appropriate filtration and $B^{n}$ is a fractional Brownian
motion. Therefore, our processes satisfy the stochastic differential
equations
\begin{align*}
Y_{t}^{n}  &  =Y_{0}^{n}+\int_{0}^{t}b(k_{n}(s),Y^{n}(k_{n}(s)))\mathrm{d}s\\
&  +\int_{0}^{t}\sigma_{W}(k_{n}(s),Y^{n}(k_{n}(s)))\mathrm{d}W_{s}^{n}%
+\int_{0}^{t}\sigma_{H}(k_{n}(s),Y^{n}(k_{n}(s)))\mathrm{d}B_{s}^{n}%
\end{align*}
almost surely. So, by step 2, when $n$ tends to infinity, we obtain
\[
Y_{t}^{\infty}=Y_{0}^{\infty}+\int_{0}^{t}b(s,Y_{s}^{\infty})\mathrm{d}%
s+\int_{0}^{t}\sigma_{W}(s,Y_{s}^{\infty}))\mathrm{d}W_{s}^{\infty}+\int
_{0}^{t}\sigma_{H}(s,Y_{s}^{\infty})\mathrm{d}B_{s}^{\infty}.
\]
Therefore, $Y^{\infty}$ satisfies (\ref{sde1}) with \ the driving noises
$W^{\infty}$ and $B^{\infty}$.

The sample paths of \ $Y^{\infty\text{ }}$belong to $C_{0}^{\eta}\ \subset
W_{0}^{\alpha,\infty}$ almost surely, and furthermore, by (\ref{z6}), we have
that
\[
\int_{0}^{T}\mathbb{E}^{W}\left[  \left\Vert Y_{s}^{\infty}\right\Vert
_{\alpha}^{2}\right]  \mathrm{d}s<\infty\text{.}%
\]
Therefore, by Definition \ref{Def: Weak Solution}, $\left(  Y^{\infty
},W^{\infty},B^{\infty}\right)  $ is a weak solution of (\ref{sde1}).
\end{proof}

\medskip We can now proceed with the proof of Theorem \ref{main}.

\medskip

\begin{proof}
[Proof of Theorem \ref{main}]The uniqueness is a consequence of the general
pathwise uniqueness proved in Theorem \ref{Uniqueness} For the existence of a
strong solution we can make use of the classical result by Yamada and Watanabe
\cite{Yamada}, which asserts that pathwise uniqueness and existence of weak
solutions imply the existence of a strong solution. The main difference with
the classical proof is that here we have two random sources independent of the
Wiener process $W$, the initial condition $X_{0}$ and the fractional Brownian
motion $B^{H}$. It suffices to replace $\mathbb{R}^{d}$ by the product space
\ $\mathbb{R}^{d}$ $\times C\left(  [0,T]\right)  ^{m}$, endowed with the
product measure $\mu\times\upsilon$, where $\mu$ is the law of $X_{0}$ and
$\upsilon$ is the law of $B^{H}$ on the space of continuous functions.
\end{proof}


\begin{thebibliography}{99}                                                                                               %


\bibitem {Alos}E. Al\`{o}s, O. Mazet and D. Nualart: Stochastic calculus with
respect to Gaussian processes. \textit{Ann. Probab.} \textbf{29} (2001), 766-801.

\bibitem {Carmona}P. Carmona and L. Coutin: Stochastic integration with
respect to fractional Brownian motion. \textit{Ann. Inst. H. Poincar\'{e}
Probab. Statist.} \textbf{39} (2003), 27-68.

\bibitem {CN}P. Cheridito and D. Nualart: Stochastic integral of divergence
type with respect to fractional Brownian motion with Hurst parameter
$H\in(0,\frac{1}{2})$. \textit{Ann. Inst. H. Poincar\'{e} Probab. Statist.
}\textbf{41} (2005), 1049-1081.

\bibitem {Coutin}L. Coutin and Z. Quian: Stochastic Analysis, rough path
analysis and fractional Brownian motions.\textit{ Probab. Theory Related
Fields} \textbf{122} (2002), 108-140.

\bibitem {Decreusefond}L. Decreusefond and A. S. \"{U}st\"{u}nel: Stochastic
analysis of the fractional Brownian motion. \textit{Potential Anal.}
\textbf{10} (1998), 177-214.

\bibitem {Decreusefond2}L. Decreusefond and A. S. \"{U}st\"{u}nel: Fractional
Brownian Motion:\ Theory and Applications. \textit{ESAIM Proceedings}
\textbf{5} (1998), 75-86.

\bibitem {Duncan}T. E. Duncan, Y. Hu and B. Pasik-Duncan: Stochastic calculus
for fractional Brownian motion I, Theory. \textit{SIAM J. Control Optim.
}\textbf{38} (2) (2000), 582-612.

\bibitem {RV}M. Gradinaru, I. Nourdin, F. Russo and P. Vallois: $m$-order
integrals and generalized Ito's formula: the case of a fractional Brownian
motion with any Hurst index. \textit{Ann. Inst. H. Poincar\'{e} Probab.
Statist.} \textbf{41} (2005), 781-806.

\bibitem {Gyongy}I. Gy\"{o}ngy and N. Krylov: Existence of strong solutions
for It\^{o}'s stochastic equations via approximations. \textit{Prob. Theory
Relat. Fields} \textbf{105} (1996), 143-158.

\bibitem {Hamadouche}D. Hamadouche:\ Invariance principles in H\"{o}lder
spaces. \textit{Port. Math.} \textbf{57} (2000), 127-151.

\bibitem {Hu}Y. Hu and B. \O ksendal: Fractional white noise calculus and
applications to finance. \textit{Infin. Dimens. Anal. Quantum Probab. Relat.
Top. }\textbf{6} (2003), 1-32.

\bibitem {Kolmogorov}A. N. Kolmogorov: Wienershe Spiralen und einige andere
interessante Kurven im Hilbertschen Raum. \textit{C. R. (Doklady) Acd. Sci.
URSS (N. S.) }\textbf{26} (1940), 115-118.

\bibitem {KubiliusW}K. Kubilius: The existence and uniqueness of the solution
of the integral equation driven by a p-semimartingale of special type.
\textit{Stochastic Process. Appl.\ }\textbf{98} (2002), 289-315.

\bibitem {Lamperti}J. Lamperti: On convergence of stochastic processes.
\textit{Trans. Amer. Math. Soc.} \textbf{104} (1962), 430-435.

\bibitem {Lyons2}T. Lyons: Differential equations driven by rough signals. I.
An extension of an inequality of L. C. Young\textit{.} \textit{Math. Res.
Lett. }\textbf{1} (1994), 451-464.

\bibitem {Lyons}T. Lyons: Differential equations driven by rough signals.
\textit{Rev. Mat. Iberoamericana} \textbf{14} (1998), 215-310.

\bibitem {Mandelbrot}B. B. Mandelbrot and J. W.Van Ness: Fractional Brownian
motions, fractional noises and applications. \textit{SIAM Rev. }\textbf{10}(4)
(1968), 422-437.

\bibitem {NualartReview}D. Nualart: Stochastic integration with respect to
fractional Brownian motion and applications. \textit{Contemp. Math.
}\textbf{336} (2003), 3--39.

\bibitem {Nualart}D. Nualart and A. R\u{a}\c{s}canu: Differential equations
driven by fractional Brownian motion. \textit{Collect. Math.} \textbf{53}
(2002), 55-81.

\bibitem {Rogers}L. C. G. Rogers: Arbitrage with fractional Brownian motion.
\textit{Math. Finance} \textbf{7} (1997), 95-105.

\bibitem {Ruzmaikina}A. A. Ruzmaikina: Stieltjes integrals of H\"{o}lder
continuous functions with applications to fractional Brownian motion.
\textit{J. Statist. Phys. }\textbf{100} (2000), 1049-1069.

\bibitem {Samko}S. G. Samko, A. A. Kilbas and O.\ I. Marichev:
\textit{Fractional Integrals and Derivatives, Theory and Applications}. Gordon
and Breach Science Publishers, Yvendon, 1993.

\bibitem {Skorokhod}A. V. Skorokhod: \textit{Studies in the theory of random
processes}. Dover, New York, 1982.

\bibitem {Yamada}T. Yamada and S. Watanabe: On the uniqueness of solutions of
stochastic differential equations I and II. \textit{J. Math. Kyoto Univ.}
\textbf{11 }(1971), 155-167 and 553-563.

\bibitem {Young}L. C. Young: An inequality of H\"{o}lder type connected with
Stieltjes integration. \textit{Acta Math.} \textbf{67} (1936), 251-282.

\bibitem {Zahle}M. Z\"{a}hle: Integration with respect to fractal functions
and stochastic calculus I. \textit{Prob. Theory Relat. Fields} \textbf{111}
(1998), 333-374.
\end{thebibliography}
\end{document}